\numberwithin{equation}{section} %\usepackage{kpfonts} \mathtoolsset{showonlyrefs}
\newcommand{\N}{\mathbb{N}}
\newcommand{\R}{\mathbb{R}}
\newcommand{\RN}{\mathbb{R}^N}
\newcommand{\rn}{\mathbb{R}^N}
\newcommand{\dist}{{\mbox{\normalfont dist}}}
\newcommand{\vertiii}[1]{{\left\vert\kern-0.25ex\left\vert\kern-0.25ex\left\vert #1
    \right\vert\kern-0.25ex\right\vert\kern-0.25ex\right\vert}}
\newcommand{\re}{\mathbb{R}}
\newcommand{\ren}{\re^N}
\renewcommand{\a }{\alpha }
\newcommand{\s }{\sigma }
\renewcommand{\O }{\Omega }
\renewcommand{\epsilon} {\varepsilon}
\DeclareMathOperator{\diam}{diam}  %\DeclareMathOperator{\sign}{sign} \DeclareMathOperator{\Tail}{Tail}
\def\XXint#1#2#3{{\setbox0=\hbox{$#1{#2#3}{\int}$ }
\vcenter{\hbox{$#2#3$ }}\kern-.6\wd0}}
\theoremstyle{plain}
\newtheorem{theorem}{Theorem}[section]
\newtheorem{proposition}[theorem]{Proposition}
\newtheorem{lemma}[theorem]{Lemma}
\newtheorem{corollary}[theorem]{Corollary}
\theoremstyle{definition}
\newtheorem{definition}[theorem]{Definition}
\theoremstyle{remark}
\newtheorem{remark}[theorem]{Remark}
\renewcommand{\le}{\leqslant}
\renewcommand{\leq}{\leqslant}
\renewcommand{\geq}{\geqslant}
\begin{document}

\title[Deterministic KPZ-type equations with nonlocal ``gradient terms'']{ Deterministic KPZ-type equations with nonlocal ``gradient terms''}

\author[B. Abdellaoui, A. J. Fern\'andez, T. Leonori and A. Younes]{Boumediene Abdellaoui, Antonio J. Fern\'andez, Tommaso Leonori and Abdelbadie Younes}

\address{
\vspace{-0.25cm}
\newline
\textbf{{\small Boumediene Abdellaoui}} \vspace{0.1cm}
\newline \indent Laboratoire d'Analyse Nonlin\'eaire et Math\'ematiques Appliqu\'ees, D\'epartement de Math\'ematiques,
\newline \indent Universit\'e Abou Bakr Belka\"id, Tlemcen, 13000, Algeria}
\email{boumediene.abdellaoui@inv.uam.es}

\address{
\vspace{-0.25cm}
\newline
\textbf{{\small Antonio J. Fern\'andez}} 
\vspace{0.1cm}
\newline \indent Instituto de Ciencias Matem\'aticas,  
\newline \indent Consejo Superior de Investigaciones Cient\'ficas, 28049 Madrid, Spain}
\email{antonio.fernandez@icmat.es}

\address{
\vspace{-0.25cm}
\newline
\textbf{{\small Tommaso Leonori}} \vspace{0.1cm}
\newline \indent Dipartimento di Scienze di Base e Applicate per l'Ingegneria,
\newline \indent Universit\'a di Roma ``Sapienza''. Via Antonio Scarpa 10, 00161 Roma, Italy}
\email{tommaso.leonori@sbai.uniroma1.it}

\address{
\vspace{-0.25cm}
\newline
\textbf{{\small Abdelbadie Younes}} \vspace{0.1cm}
\newline \indent Laboratoire d'Analyse Nonlin\'eaire et Math\'ematiques Appliqu\'ees, D\'epartement de Math\'ematiques,
\newline \indent Universit\'e Abou Bakr Belka\"id, Tlemcen, 13000, Algeria}
\email{abdelbadieyounes@gmail.com}

\maketitle
$ $

\vspace{-1cm}

\begin{abstract}
The main goal of this paper is to prove existence and non-existence results for deterministic Kardar--Parisi--Zhang type equations involving non-local ``gradient terms''. More precisely, let $\Omega \subset \RN$, $N \geq 2$, be a bounded domain with boundary $\partial \Omega$ of class $C^2$. For $s \in (0,1)$, we consider problems of the form
\[ \tag{KPZ}
\left\{
\begin{aligned}
(-\Delta)^s u & = \mu(x)\, |\mathbb{D}(u)|^q + \lambda f(x), \quad && \textup{ in } \Omega,\\
u & = 0, && \textup{ in } \RN \setminus \Omega,
\end{aligned}
\right.
\]
where $q > 1$ and  $\lambda > 0$ are real parameters, $f$ belongs to a suitable Lebesgue space, $\mu \in L^{\infty}(\Omega)$ and $\mathbb{D}$ represents a nonlocal ``gradient term''. Depending on the size of $\lambda > 0$, we derive existence and non-existence results. In particular, we solve several open problems posed in %Abdellaoui, Peral (2018) and Abdellaoui, Fern\'andez (2020).
\cite[Section 6]{AP-KPZ-2018} and \cite[Section 7]{AF-2020}.

\bigbreak
\noindent {\sc Keywords:} Fractional Laplacian, nonlocal ``gradient terms'',  deterministic KPZ--type equations.

\smallbreak
\noindent {\sc 2020 MSC:} 35R11, 35J60, 26A33.
\medbreak
\end{abstract}

\section{Introduction}
\noindent In this paper we analyse the existence and non--existence of solutions for deterministic Kardar--Parisi--Zhang type equations involving non-local ``gradient terms''. For $s, t \in (0,1)$, we consider problems of the form
\[ \tag{KPZ} \label{KPZ}
\left\{
\begin{aligned}
(-\Delta)^s u & = \mu(x)\, |\mathbb{D}_t(u)|^q + \lambda f(x), \quad && \textup{ in } \Omega,\\
u & = 0, && \textup{ in } \RN \setminus \Omega,
\end{aligned}
\right.
\]
depending on a real parameter $\lambda > 0$. Here, $\Omega \subset \RN$, $N \geq 2$, is a bounded domain with boundary $\partial \Omega$ of class $C^2$, $f$ belongs to a suitable Lebesgue space, $\mu \in L^{\infty}(\Omega)$, $q \in (1,+\infty)$ and $\mathbb{D}_t$ represents one of the following nonlocal ``gradient terms'' :
\begin{align} 
& \circ\quad  (-\Delta)^{\frac{t}{2}}u(x) :=  a_{N,\frac{t}{2}} \, {\rm{P.V.}} \int_{\RN} \frac{u(x)-u(y)}{|x-y|^{N+t}} dy  && (\textup{Half $t$--Laplacian}), \tag{KPZ$_1$} \label{KPZ1-intro} \\
& \circ\quad  \nabla^{t} u (x) := \mu_{N,t} \int_{\RN} \frac{(x-y)(u(x)-u(y))}{|x-y|^{N+t+1}} dy   && ( \textup{Riesz $t$--Gradient}), \tag{KPZ$_2$} \label{KPZ2-intro} \\
& \circ\quad \mathcal{D}_t u (x) := \left( \frac{a_{N,t}}{2} \int_{\RN} \frac{(u(x)-u(y))^2}{|x-y|^{N+2t}}dy. \right)^{\frac12} && (\textup{Stein $t$--Functional}). \tag{KPZ$_3$} \label{KPZ3-intro}
\end{align}
Note that the previous definitions make sense for any function $u \in C_c^{\infty}(\RN)$. Also, let us point out that
\[ a_{N,\sigma} := -\, \frac{ 2^{2\sigma} \Gamma \left( \frac{N}{2}+\sigma \right)}{ \pi^{\frac{N}{2}} \Gamma(-\sigma) } \qquad  \textup{ and }  \qquad \mu_{N,\sigma} := \frac{2^{\sigma} \Gamma( \frac{N+\sigma+1}{2} )}{\pi^{\frac{N}{2}} \Gamma( \frac{1-\sigma}{2})}, 
\]
are normalization constants and ``P.V.'' stands for ``in the principal value sense''. Since both these constants and the ``principal value sense'' will not play an important role in our work, we will omit them from now on. 

Before going further, we would like to emphasize that the three different nonlocal ``gradient terms'' that we consider can be traced back many years ago. Since nowadays the \textit{fractional Laplacian} does not need any further presentation, let us focus on the other two terms. As very well explained in \cite[page 3]{SS-2015}, the origin of the \textit{Riesz $t$--Gradient} seems to be \cite{R-49}. Note also that this operator has been rediscovered several times since \cite{R-49} and has received considerable attention in the last few years. See for instance \cite{S-20, SSV-S-2017, MS-2015, CS-2019}. On the other hand, the \textit{Stein $t$--Functional} can be at least traced back to \cite{S-61}. Moreover, this operator naturally appears as the nonlocal equivalent to the gradient when considering the minimization of fractional Harmonic maps into the sphere. See for instance the recent papers \cite{CD-2018, S-2015, MillS-2015}. 

In contrast with the local case
\begin{equation} \label{local}
\left\{
\begin{aligned}
-\Delta u & = \mu(x) |\nabla u|^q + \lambda f(x), && \textup{ in } \Omega,\\
u & = 0, && \textup{ on } \partial \Omega,
\end{aligned}
\right.
\end{equation}
where the literature is very extensive, there exist very few results dealing with equations of the form \eqref{KPZ}. We refer to \cite{AF-2020}, by the first two authors, for a detailed introduction to the subject. However, since the publication of \cite{AF-2020}, some results have been published. For instance, we would like to mention the recent paper \cite{AC2020}, where the authors establish existence and non--existence results for problems of the form \eqref{KPZ} with a local operator (the Laplacian instead of the fractional Laplacian) and a nonlocal nonlinearity. Also, let us mention \cite{BM-2021}, where the authors establish the equivalence between different notions of solution to problems of the form \eqref{KPZ}.
Finally, let us stress that solutions to equations with nonlocal diffusion and a nonlocal \lq\lq gradient term\rq\rq{ } with nonsingular kernels have been studied in \cite{LMS}.

In \cite{AF-2020}, the first two authors analyse the existence and non--existence of solutions to \eqref{KPZ}, under the additional assumption $t = s \in (1/2,1)$. The main goal of this paper is to refine the approach of \cite{AF-2020} in order to deal also with the cases where $s \in (0,1/2]$ and/or $t \neq s$. Depending on the real parameter $\lambda > 0$, we analyse the existence and non--existence of weak solutions to \eqref{KPZ} under the assumptions
\begin{equation} \tag{A1} \label{A1}
\left\{
\begin{aligned}
\ & q \in (1,+\infty),  \\
&  0< t < \min\{1,s(1+(qN)^{-1})\},\\
& f \in L^m (\Omega)\ \textup{ for some } m > N/s\ \textup{ and }\  \mu \in L^{\infty} (\Omega).
\end{aligned}
\right.
\end{equation}
Following \cite{C_V_2014-JFA,C_V_2014-JDE}, we introduce the subsequent notion of weak solution to \eqref{KPZ} :

\begin{definition} \label{weak Sol KPZ1} We say that $u$ is a\textit{ weak solution} to \eqref{KPZ} if $u$ and $|\mathbb{D}_t(u)|^q$ belong to $L^1(\Omega)$, $u \equiv 0$ (a.e.) in $\RN \setminus \Omega$ and
\begin{equation}
\int_{\Omega} u (-\Delta)^s \phi\, dx = \int_{\Omega} \big( \mu(x)|\mathbb{D}_t(u)|^q + \lambda f(x) \big) \phi\, dx,
\end{equation}
for all $\phi$ belonging to
\begin{equation*} 
\mathbb{X}^s(\Omega) := \Big\{\phi\in C^s(\ren)\,: \, \phi (x) = 0 \textup{ for all }x \in \RN \setminus \Omega \textup{ and } (-\Delta)^s \phi \in L^{\infty}(\Omega)\Big\}.
\end{equation*} 
\end{definition}

\begin{remark} With a slight abuse of notation we use {\rm(KPZ$_{\,i}$)}, $i = 1,2,3$, to refer to \eqref{KPZ} with $\mathbb{D}_t = (-\Delta)^{\frac{t}{2}}$, $\mathbb{D}_t = \nabla^t$ and $\mathbb{D}_t = \mathcal{D}_t$ respectively. See \eqref{KPZ1-intro}, \eqref{KPZ2-intro} and \eqref{KPZ3-intro} for the corresponding definitions.
\end{remark}

\medbreak
Our main existence result can be informally stated as follows:
 
\begin{theorem} \label{exist-intro}
Assume that \eqref{A1} holds true.
Then, there exists $\lambda^{\star} > 0$ such that, for all
$0 < \lambda \leq \lambda^{\star}$, {\rm(KPZ$_{\,i}$)}, $i = 1,2,3$, has a weak solution $u$. Moreover, $u \in W^{s,p}(\RN) \cap C^{0,s}(\RN)$ for all $1 < p < +\infty$. 
\end{theorem}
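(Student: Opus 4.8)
The plan is to obtain the solution by a fixed-point argument based on Schauder's theorem, exploiting the smoothing properties of $(-\Delta)^{-s}$ together with the smallness of $\lambda$. First I would fix a suitable Banach space: the natural choice is $E := W^{s,p}(\RN) \cap C^{0,s}(\RN)$ for a large but finite $p$, or more manageably a closed ball
\[
\mathcal{B}_R := \Big\{ v \in W^{s,p}_0(\Omega) : \|v\|_{W^{s,p}(\RN)} + \|v\|_{C^{0,s}(\RN)} \leq R \Big\},
\]
and define on it the map $T$ that sends $v$ to the unique weak solution $u$ of the linear problem $(-\Delta)^s u = \mu(x)\,|\mathbb{D}_t(v)|^q + \lambda f(x)$ in $\Omega$, $u=0$ in $\RN\setminus\Omega$. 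The key point is that the right-hand side lies in $L^m(\Omega)$ with $m > N/s$ whenever $v \in \mathcal{B}_R$: indeed $\mathbb{D}_t(v)$ is controlled by a fractional derivative of order $t$ of $v$, and since $t < s(1 + (qN)^{-1})$ one can choose $p$ large enough that $\mathbb{D}_t(v) \in L^{qm}(\Omega)$, hence $|\mathbb{D}_t(v)|^q \in L^m(\Omega)$, with a quantitative bound in terms of $R$. Then the sharp regularity theory for the fractional Poisson equation with $L^m$, $m>N/s$, data (Calder\'on--Zygmund estimates plus $C^{0,s}$ boundary regularity, which is exactly the framework recalled from \cite{C_V_2014-JFA,C_V_2014-JDE} and the references on fractional elliptic regularity) gives $u = T(v) \in W^{s,p}(\RN) \cap C^{0,s}(\RN)$ with
\[
\|u\|_{W^{s,p}(\RN)} + \|u\|_{C^{0,s}(\RN)} \leq C\big( \||\mathbb{D}_t(v)|^q\|_{L^m(\Omega)} + \lambda \|f\|_{L^m(\Omega)} \big) \leq C\big( \kappa R^q + \lambda \|f\|_{L^m(\Omega)} \big).
\]

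\textbf{Invariance, continuity and compactness.} With this estimate in hand, I would choose $R$ and $\lambda^{\star}$ so that $T$ maps $\mathcal{B}_R$ into itself: since $q > 1$, the function $R \mapsto C\kappa R^q$ is superlinear, so picking $R$ small (below the value where $C\kappa R^{q-1} = 1/2$, say) and then $\lambda^{\star} := R/(2C\|f\|_{L^m(\Omega)})$ makes $C\kappa R^q + C\lambda\|f\|_{L^m(\Omega)} \leq R$ for all $0 < \lambda \leq \lambda^{\star}$; this is the standard ``small data'' mechanism. Next, $\mathcal{B}_R$ is convex, and it is compact in a weaker norm — e.g. in $C^{0,s'}(\overline\Omega)$ for $s' < s$, or in $L^q(\Omega)$ together with a weak topology — by Arzel\`a--Ascoli and Rellich-type compact embeddings for fractional Sobolev spaces, so I would run Schauder's fixed point theorem in that weaker topology (while the invariant set keeps the stronger bound). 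Continuity of $T$ in the weaker norm follows because $v_n \to v$ in $C^{0,s'}$ implies $\mathbb{D}_t(v_n) \to \mathbb{D}_t(v)$ and then, by dominated convergence (the uniform $\mathcal{B}_R$-bound furnishes the dominating function), $|\mathbb{D}_t(v_n)|^q \to |\mathbb{D}_t(v)|^q$ in $L^m(\Omega)$, whence $T(v_n) \to T(v)$ by the linear estimate. A fixed point $u = T(u)$ is then a weak solution in the sense of Definition \ref{weak Sol KPZ1} (one checks the test-function identity directly, using that $u$ solves the linear problem with the stated right-hand side), and it inherits the regularity $u \in W^{s,p}(\RN) \cap C^{0,s}(\RN)$ for every $1 < p < +\infty$ since $p$ was arbitrary in the argument.

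\textbf{Main obstacle.} The delicate part is the estimate $\||\mathbb{D}_t(v)|^q\|_{L^m(\Omega)} \lesssim \|v\|_{W^{s,p}}^q$ with $m > N/s$, which is precisely where the hypothesis $t < \min\{1, s(1+(qN)^{-1})\}$ in \eqref{A1} enters and where the three operators must be handled somewhat separately. One needs: (i) pointwise or $L^r$ bounds expressing each of $(-\Delta)^{t/2}v$, $\nabla^t v$, $\mathcal{D}_t v$ in terms of $v$ and a fractional gradient of order $t$; (ii) a Sobolev-type embedding $W^{s,p}_0(\Omega) \hookrightarrow W^{t,\rho}(\Omega)$ (or a direct kernel estimate) producing $\mathbb{D}_t(v) \in L^{\rho}$ with $\rho = qm$; and (iii) checking that the exponent arithmetic closes, i.e. that $\rho$ can be taken $> qN/s$, which reduces exactly to the strict inequality $t < s + s/(qN)$. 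The Stein functional \eqref{KPZ3-intro} is the most awkward, since it is a nonlinear (quadratic-then-square-root) expression, but it is dominated pointwise by a Gagliardo-type seminorm density that again lands in the right Lebesgue space; I expect this case to require the most care, together with the borderline subtlety that only $t \le s$ allows $p = +\infty$-type arguments whereas $t$ slightly above $s$ forces one to genuinely use the $(qN)^{-1}$ room and choose $p$ finite and large.
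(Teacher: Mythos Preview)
Your overall architecture --- Schauder fixed point on a small ball, with the smallness of $\lambda$ used to close a superlinear invariance inequality --- is exactly the paper's strategy. The discrepancy is in the choice of the invariant set, and there it is a genuine gap rather than a cosmetic one.

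You propose to work in $\mathcal{B}_R \subset W^{s,p}_0(\Omega)\cap C^{0,s}(\RN)$ and then invoke an embedding $W^{s,p}_0(\Omega)\hookrightarrow W^{t,\rho}(\Omega)$ to get $\mathbb{D}_t(v)\in L^{qm}$. This works when $t\le s$ (and your $C^{0,s}$ control even gives local boundedness of $(-\Delta)^{t/2}v$ for $t<s$), but under \eqref{A1} the range $s<t<s(1+(qN)^{-1})$ is allowed, and in that regime the embedding $W^{s,p}\hookrightarrow W^{t,\rho}$ simply fails for every choice of $p,\rho$: no amount of integrability compensates for a genuine deficit in differentiability. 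Likewise, $C^{0,s}$ regularity does not yield any control on $(-\Delta)^{t/2}v$, $\nabla^t v$, or $\mathcal{D}_t v$ when $t>s$. So as written, neither the invariance step nor the continuity step (``$v_n\to v$ in $C^{0,s'}$ implies $\mathbb{D}_t(v_n)\to\mathbb{D}_t(v)$'') goes through for $t>s$. Your last paragraph correctly flags this as the delicate point, but the resolution you sketch (item (ii)) is precisely the step that cannot be made to work.

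The paper's fix is to build the $t$-regularity into the invariant set rather than deduce it from $s$-regularity. Concretely, set $\gamma=\max\{t,s\}$ and work in the Bessel space $L^{\gamma,1+\eta}_0(\Omega)$ with the closed convex set $E_\eta=\{v:\|v\|_{L^{\gamma,r}_0(\Omega)}\le \ell^{1/q}\}$ for a suitably large $r$. The point is that the Calder\'on--Zygmund theory for $(-\Delta)^s u=h$ (Proposition~\ref{CZ}) gives directly $\|u\|_{L^{t,p}(\RN)}\lesssim\|h\|_{L^m}$ for all $p<\widetilde p(m,s,t)$, so the solution map lands in a space where $\mathbb{D}_t(u)$ is controlled by definition of the norm; then $\|(-\Delta)^{t/2}\varphi\|_{L^r}\le\|\varphi\|_{L^{t,r}_0}\le\widetilde k\,\|\varphi\|_{L^{\gamma,r}_0}$ closes the loop with no embedding needed. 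Compactness and continuity are then obtained not via Arzel\`a--Ascoli in $C^{0,s'}$ but from a dedicated compactness result for the solution map $\mathbb{G}_s:L^1(\Omega)\to L^{t,p}_0(\Omega)$ (Proposition~\ref{solMap}) together with an interpolation between the $L^{\gamma,1+\eta}$ and $L^{\gamma,r}$ norms; for the Stein functional $\mathcal{D}_t$ the continuity argument needs a separate treatment (splitting $\alpha\ge2$ and $1<\alpha<2$), which is another place where your dominated-convergence sketch would need more work.
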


%\begin{theorem}  \label{existe-intro2}
%Assume that \eqref{A1} holds true with $q = 2$. Then, there exists $%\lambda^{\star} > 0$ such that, for all $0 < \lambda \leq \lambda^{\star}$, {\rm (KPZ$_3$)} has a weak solution $u$. Moreover, $u \in W^{s,p}(\RN) \cap C^{0,s}(\RN)$ for all $1 < p < + \infty$. 
%\end{theorem}

Theorem \ref{exist-intro} is a particular case of the more general existence results proved in Section \ref{existenceResults}. We refer directly to Section \ref{existenceResults} for more general statements. In particular, let us emphasize that, in Section \ref{existenceResults}, we substantially lower the regularity imposed on the data $f$. Furthermore, arguing as in the proof of \cite[Theorem 1.3]{AF-2020}, it is possible to show that the regularity considered in Section \ref{existenceResults} is almost optimal. Note also that our existence results solve several open problems posed in \cite{AF-2020, AP-KPZ-2018}.

The proofs of our existence results rely on the combination of fixed point arguments in the spirit of \cite{P-2014, MP-2016} with \textit{global} fractional Calder\'on--Zygmund regularity results for the fractional Poisson equation 
\begin{equation} \label{Poisson}
\left\{
\begin{aligned}
(-\Delta)^s u & = h, \quad && \textup{ in } \Omega,\\
u & = 0, && \textup{ in } \RN \setminus \Omega.
\end{aligned}
\right.
\end{equation}
This approach was already implemented in \cite{AF-2020} by the first two authors. However, the required \textit{global} fractional Calder\'on--Zygmund regularity theory was not available for $s \in (0,1/2]$. Furthermore, the Calder\'on--Zygmund regularity used in \cite{AF-2020} (cf. \cite[Section 3]{AF-2020}) contains several imprecisions. In the recent paper \cite{AFLY-2021}, we establish sharp \textit{global} fractional Calder\'on--Zygmund regularity results for \eqref{Poisson} in the full range $s \in (0,1)$ (fixing in particular the issues of \cite[Section 3]{AF-2020}). Having at hand these regularity results, the proofs of our existence results follow from a refinement of the fixed point approach implemented in \cite{AF-2020}. We refer to Section \ref{existenceResults} for more details. 

Taking into account Theorem \ref{exist-intro}, it is very natural to ask whether the smallness assumption in $\lambda$ is necessary or not. Note that, in \cite[Theorem 1.2]{AF-2020}, the first two authors established a non--existence result for (KPZ$_{3}$) with $t = s \in (0,1)$. We focus here in proving a non-existence result to (KPZ$_1$). This was left as an open problem in \cite[Section 7]{AF-2020}. 

\begin{theorem} \label{nonexist-intro}
Assume that \eqref{A1} holds true with $t = s$ and $q = 2$ and suppose that $\mu(x) \geq \mu_1 > 0$ and $f \gneqq 0$. Then, there exists $\lambda^{\star \star} > 0$ such that, for all
$ \lambda > \lambda^{\star \star}$, {\rm(KPZ$_{1}$)} has no weak solution in $W^{s,2}(\rn)$.
\end{theorem}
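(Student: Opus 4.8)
The plan is to argue by contradiction: assume that $u \in W^{s,2}(\rn)$ is a weak solution of (KPZ$_1$) with $t=s$, $q=2$, and try to bound $\lambda$ from above in terms of fixed structural data. Since $q=2$ and $\mathbb{D}_s = (-\Delta)^{s/2}$, the equation reads $(-\Delta)^s u = \mu(x)\,|(-\Delta)^{s/2}u|^2 + \lambda f$. The key point is the Bochner--type / "magic" identity relating $(-\Delta)^s$ and $(-\Delta)^{s/2}$: for suitable $u$,
\[
\int_{\RN} \big((-\Delta)^s u\big)\, \phi\, dx = \int_{\RN} (-\Delta)^{s/2}u \;(-\Delta)^{s/2}\phi \, dx,
\]
so the quadratic term $|(-\Delta)^{s/2}u|^2$ is exactly the "carré du champ" of $u$ for the fractional Laplacian. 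This is what makes $q=2$ special and mirrors the classical Cole--Hopf heuristic. Testing the equation against a carefully chosen $\phi$, I expect to pick up a term of the form $\int |(-\Delta)^{s/2}u|^2 \phi$ that can be matched against the nonlinearity using $\mu \geq \mu_1$.

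Concretely, I would proceed as follows. First fix the first eigenfunction $\varphi_1 > 0$ of $(-\Delta)^s$ in $\Omega$ with Dirichlet exterior condition, and eigenvalue $\lambda_1(\Omega) > 0$; note $\varphi_1 \in \mathbb{X}^s(\Omega)$ after checking the needed regularity (this is standard for $C^2$ domains). The natural test function is $\phi = \varphi_1$, or a power/exponential transform of $\varphi_1$ tailored to exploit the quadratic gradient term. The cleanest route is the substitution $v = e^{\mu_1 u}-1$ (a fractional Cole--Hopf change of variables): one shows, using the pointwise inequality coming from convexity of the exponential together with the identity above, that $v$ is a nonnegative subsolution of a linear problem, roughly $(-\Delta)^s v \geq \mu_1 \lambda f\,(1+v)$ in the weak sense. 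Testing this against $\varphi_1$ gives
\[
\lambda_1(\Omega) \int_\Omega v\, \varphi_1\, dx \;=\; \int_\Omega v\, (-\Delta)^s \varphi_1\, dx \;\geq\; \mu_1 \lambda \int_\Omega f\, (1+v)\, \varphi_1\, dx \;\geq\; \mu_1 \lambda \int_\Omega f\, \varphi_1\, dx,
\]
while on the other hand one needs an upper bound on $\int_\Omega v\,\varphi_1\,dx$ that does \emph{not} blow up with $\lambda$, or rather grows more slowly than linearly; comparing the two forces $\lambda$ to stay below some explicit $\lambda^{\star\star}$ depending only on $\lambda_1(\Omega)$, $\mu_1$, $f$, $\varphi_1$, $s$. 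Since $f \gneqq 0$ and $\varphi_1 > 0$ in $\Omega$, the quantity $\int_\Omega f\varphi_1\,dx$ is a strictly positive constant, so the bound is nontrivial.

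The main obstacle I anticipate is twofold. First, justifying the fractional Cole--Hopf computation rigorously at the level of weak solutions: the identity $\int ((-\Delta)^s u)\phi = \int (-\Delta)^{s/2}u\,(-\Delta)^{s/2}\phi$ and the convexity inequality $(-\Delta)^s(e^{\mu_1 u}) \geq \mu_1 e^{\mu_1 u}\big((-\Delta)^s u - \mu_1 |(-\Delta)^{s/2}u|^2\big)$ (in a suitable weak/distributional sense) require $u \in W^{s,2}(\RN)$ plus enough integrability of $e^{\mu_1 u}$ and of the product with $\varphi_1$; controlling the exterior contributions (the nonlocal tail, where $u=0$ so $e^{\mu_1 u}-1 = 0$, which actually helps) and the principal value must be done with care, presumably via a regularized/truncated kernel and a limiting argument, or by testing directly in the bilinear form associated to $W^{s,2}$. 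Second, obtaining the a priori upper bound on $\int_\Omega v\,\varphi_1\,dx$ independent of (or sublinear in) $\lambda$: this likely comes from using a better test function than $\varphi_1$, or from a bootstrap using the $W^{s,2}$ a priori information on $u$ together with fractional Sobolev embeddings, so that the energy $\|(-\Delta)^{s/2}u\|_{L^2}^2$ is controlled and hence $v$ is controlled in a fixed space. I would handle the hypothesis "no weak solution in $W^{s,2}(\RN)$" precisely by noting that $W^{s,2}$-membership is exactly what legitimizes testing with $\varphi_1$ in the energy formulation and gives meaning to $|(-\Delta)^{s/2}u|^2 \in L^1$; without it the argument (and likely the nonexistence) breaks, which is why the statement is phrased in that class.
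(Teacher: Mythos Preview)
Your approach has a genuine gap at its very first step: the claim that $|(-\Delta)^{s/2}u|^2$ is the carr\'e du champ of the fractional Laplacian is false. The pointwise carr\'e du champ for $(-\Delta)^s$ is
\[
\Gamma(u,u)(x)=\tfrac{a_{N,s}}{2}\int_{\RN}\frac{(u(x)-u(y))^2}{|x-y|^{N+2s}}\,dy=(\mathcal{D}_s u(x))^2,
\]
i.e.\ the Stein functional appearing in (KPZ$_3$), not $|(-\Delta)^{s/2}u(x)|^2$; these two quantities agree only after integration over $\RN$. The identity $\int((-\Delta)^s u)\phi=\int(-\Delta)^{s/2}u\,(-\Delta)^{s/2}\phi$ is a Plancherel-type integrated identity, not a pointwise one, and therefore cannot be upgraded to the convexity inequality $(-\Delta)^s(e^{\mu_1 u})\ge \mu_1 e^{\mu_1 u}\big[(-\Delta)^s u-\mu_1|(-\Delta)^{s/2}u|^2\big]$ that your Cole--Hopf substitution requires. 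The correct chain-rule inequality of that type carries $(\mathcal{D}_s u)^2$ on the right, so the whole scheme is tailored to (KPZ$_3$), not (KPZ$_1$). Even setting this aside, you yourself flag that the upper bound on $\int_\Omega v\varphi_1$ independent of $\lambda$ is missing, and no mechanism for producing it is proposed.

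The paper's argument is completely different and much more direct. It introduces the two torsion functions $\phi,\psi$ solving $(-\Delta)^s\phi=1$ and $(-\Delta)^{s/2}\psi=1$ in $\Omega$ (zero outside). Testing (KPZ$_1$) with $\phi$ gives $\int_\Omega u\ge\mu_1\int_\Omega|(-\Delta)^{s/2}u|^2\phi+\lambda\int_\Omega f\phi$, while testing the $\psi$-equation with $u$ (licit since $W_0^{s,2}(\Omega)\subset W_0^{s/2,2}(\Omega)$) gives $\int_\Omega u=\int_\Omega\psi\,(-\Delta)^{s/2}u$. Young's inequality then yields $\int_\Omega\psi\,(-\Delta)^{s/2}u\le\mu_1\int_\Omega|(-\Delta)^{s/2}u|^2\phi+C\mu_1^{-1}\int_\Omega\psi^2\phi^{-1}$, so the nonlinear term cancels and one is left with $\lambda\int_\Omega f\phi\le C\mu_1^{-1}\int_\Omega\psi^2\phi^{-1}$. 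Since $\phi$ is comparable to $\delta^s$ and $\psi$ to $\delta^{s/2}$ near $\partial\Omega$, the right-hand side is a finite structural constant, which gives the explicit $\lambda^{\star\star}$. No exponential change of variable, no eigenfunction, and no a priori control on $u$ beyond $W_0^{s,2}$ are needed.
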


\begin{remark} 
The non--existence for $\lambda$ large in the case where $\mathbb{D}_t = \nabla^t$ remains completely open and a different approach is missing. 
\end{remark}

% $ $
%\begin{itemize} 
%\item[$\circ$] In \cite[Theorem 1.2]{AF-2020} the non--existence for $\lambda$ large was proved in the particular case where $t = s$, $q = 2$ and $\mathbb{D}_s = \mathcal{D}_s$. 
%\item[$\circ$] The proof of the non--existence result for \eqref{KPZ3-intro} goes on the line of the proof of \cite[Theorem 1.2]{AF-2020}. However, the proof for \eqref{KPZ1-intro} is substantially different and was left open in \cite{AF-2020}. 
%\item[$\circ$] The non--existence for $\lambda$ large in the case where $\mathbb{D}_t = \nabla^{t}$ remains completely open. %Along the same lines, it would be interesting to analyse whether $\lambda^{\star}$ in Theorem \ref{exist-intro} and $\lambda^{\star \star}$ in Theorem \ref{noexist-intro} are the same.  
%\end{itemize}

\subsection*{Organization of the paper} In the next section we introduce the main function spaces involved in our results and prove the continuity and compactness of the solution map for the fractional Poisson equation with $L^1$--data. In Section \ref{existenceResults}, we prove our main existence results to \eqref{KPZ}, from which Theorem \ref{exist-intro} immediately follows. Finally, in Section \ref{nonExistenceResults}, we prove non-existence results for \eqref{KPZ1} and \eqref{KPZ3} when $\lambda > 0$ is large. 

\subsection*{Acknowledgements.} 
This work has received funding from the European Research Council (ERC)
under the European Union's Horizon 2020 research and innovation programme
through the Consolidator Grant agreement 862342 (A. J. F.). B. A. is partially supported by projects MTM2016-80474-P and  PID2019-110712GB-I00, MINECO, Spain. B. A. and A. Y. are partially supported by the DGRSDT, Algeria. Part of this work was done while A. J. F. was visiting the Universit\`a di Roma ``La Sapienza'' and the Universit\`a di Bologna. He thanks his hosts for the kind hospitality and the financial support. The authors wish to thank the anonymous referee for the very useful remarks, which helped to improve the presentation of the paper.

\section{Function spaces and tools}
\noindent We collect here the definition of the main function spaces involved in our results and some other tools. First of all, recall that, for all $s \in (0,1)$ and $ 1 \leq p < +\infty$, the fractional Sobolev space $W^{s,p}(\RN)$ is defined as
$$
W^{s,p}(\RN):= \left\{ u \in L^p(\RN) : \iint_{\R^{2N}} \frac{|u(x)-u(y)|^p}{|x-y|^{N+sp}} dx dy < +\infty \right\}.
$$
It is a Banach space endowed with the usual norm
$$
\|u\|_{W^{s,p}(\RN)} = \left( \|u\|_{L^p(\RN)}^p + \iint_{\R^{2N}} \frac{|u(x)-u(y)|^p}{|x-y|^{N+sp}} dx dy \right)^{\frac{1}{p}}.
$$
Also, having at hand $W^{s,p}(\RN)$, we define the space $W_0^{s,p}(\Omega)$ as
\[ W_0^{s,p}(\Omega) := \left\{ u \in W^{s,p}(\ren): u \equiv 0 \textup{ in } \ren \setminus \Omega \right\},\]
and recall that, thanks to the Sobolev inequality, it is a Banach space endowed with the norm
\[ \|u\|_{W^{s,p}_0(\Omega)} := \left( \iint_{D_{\Omega}} \frac{|u(x)-u(y)|^p}{|x-y|^{N+sp}} dx dy \right)^{1/p}, \quad \textup{ where } \quad  D_{\Omega} := (\Omega \times \ren) \cup ((\RN \setminus \Omega) \times \Omega).\]

\noindent Next, we remind that, for any $s \in (0,1)$ and $1 \leq p < +\infty$, the Bessel potential space  is defined as 
$$
L^{s,p}(\RN) \ :=   \ \overline{ \big\{u \in C_c^{\infty}(\RN)\big\} }^{\ \pmb{|}\cdot\pmb{|}_{L^{s,p}(\RN)}},
$$
where 
$$
\pmb{|} u \pmb{|}_{L^{s,p}(\RN)} = \|(1-\Delta)^{\frac{s}{2}}u\|_{L^p(\RN)} \quad \textup{ and } \quad (1-\Delta)^{\frac{s}{2}}u = \mathcal{F}^{-1} ( (1+|\cdot|^2)^{\frac{s}{2}} \mathcal{F} u), \quad \forall\ u \in C_c^{\infty}(\RN).
$$
Let us stress that, in the case where $s \in (0,1)$ and $1 < p < +\infty$,
$$
\|u\|_{L^{s,p}(\RN)} := \|u\|_{L^p(\RN)} + \|(-\Delta)^{\frac{s}{2}}u\|_{L^p(\RN)}
$$
is an equivalent norm for $L^{s,p}(\RN)$ (see e.g. \cite[Theorem 2]{S-61}). By \cite[Theorem 1.1]{S-61}, we also know that, if in addition $2N/(N+2s) < p < + \infty$, then $L^{s,p}(\RN)$ can be equipped with the equivalent norm
$$
\vertiii{u}_{L^{s,p}(\RN)} :=  \|u\|_{L^p(\RN)} + \|\mathcal{D}_s(u)\|_{L^p(\RN)}.
$$
In analogy with $W_0^{s,p}(\Omega)$, let us define
$$
L_0^{s,p}(\Omega):= \big\{ u \in L^{s,p}(\RN) : u \equiv 0 \textup{ in } \RN \setminus \Omega \big\},
$$
and stress that, if $0 < s < 1$ and $1 < p < +\infty$, it is a Banach space endowed with the norm
$$
\|u\|_{L^{s,p}_0(\Omega)} := \|(-\Delta)^{\frac{s}{2}} u \|_{L^p(\RN)}.
$$
If in addition $2N/(N+2s) < p < + \infty$, then $L_{0}^{s,p}(\Omega)$ can also be equipped with the equivalent norm
$$
\vertiii{u}_{L^{s,p}_0(\Omega)} := \|\mathcal{D}_s(u)\|_{L^p(\RN)}.
$$

\noindent Let us as well recall that, for all $0 < \epsilon < \sigma < 1$ and all $1 < p < +\infty$, by \cite[Theorem 7.63, (g)]{adams}, we have
$$
L^{\sigma+\epsilon,p}(\RN) \subset W^{\sigma,p}(\RN) \subset L^{\sigma-\epsilon,p}(\RN).
$$
It is also well known that, for all $1 \leq p < +\infty$ and all $0 < \sigma \leq \sigma' < 1$,
$$
W^{\sigma',p}(\RN) \subset W^{\sigma,p}(\RN),
$$
and that (cf. \cite[Theorem 7.63 (c)]{adams}), if in addition $1 < p < + \infty$, 
$$
L^{\sigma',p}(\RN) \subset L^{\sigma,p}(\RN).
$$
Since the constants will be useful later on, let us emphasize there exists $\widetilde{k} := \widetilde{k} (\sigma,\sigma',p) \geq 1$ such that
\begin{equation}\label{emb}
\|u\|_{L^{\sigma,p}(\RN)} \leq \widetilde{k} \|u\|_{L^{\sigma',p}(\RN)}, \quad \textup{ for all } u \in L^{\sigma',p}(\RN),
\end{equation}
and
\begin{equation} \label{emb2}
\|u\|_{W^{\sigma,p}(\RN)} \leq \widetilde{k} \|u\|_{W^{\sigma',p}(\RN)}, \quad \textup{ for all } u \in W^{\sigma',p}(\RN),
\end{equation}

\medbreak
Now, for $0 < s \leq t < \min\{1,s(1+N^{-1})\}$, let us set\footnote{We use,  here and in the sequel,  the convention $1/a^{+} = +\infty$ if $a \leq 0$.}
\begin{equation*}
\widetilde{p}\,(m,s,t):= 
\left\{
\begin{aligned}
& \frac{1}{(t-s)^{+}}, \quad && \textup{ if } m > \frac{N}{2s-t},\\
& \min\Big\{\frac{mN}{N-ms+mN(t-s)}, \frac{1}{(t-s)^{+}} \Big\}, \quad && \textup{ if } 1 \leq m < \frac{N}{2s-t}.
\end{aligned}
\right.
\end{equation*} 
In our next result, which will be very useful in the sequel, we describe the regularity of the (unique) solution to \eqref{Poisson}. Note that such a  result is contained in  \cite[Theorems 1.3 and 5.2,  Corollaries 5.3 -- 5.6]{AFLY-2021}. 

\begin{proposition} \label{CZ}
Let $0<s\leq t < \min\{1,s(1+N^{-1})\}$ and let $u$ be the (unique) weak solution to \eqref{Poisson} with $h \in L^{m}(\Omega)$ for some $m\geq 1$.
Then, for all $1 < p < \widetilde{p}$, there exists $\widetilde{C}(N,s,p,m,\Omega) > 0$ such that
\begin{equation} \label{CZeq}
 \|u\|_{L^{t,p}(\RN)} \leq  \widetilde{C} \, \|h\|_{L^m(\Omega)} \quad \textup{ and } \quad \|u\|_{W^{t,p}(\RN)} \leq \widetilde{C} \, \|h\|_{L^m(\Omega)}.
\end{equation}
\end{proposition}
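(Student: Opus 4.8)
The plan is to obtain Proposition~\ref{CZ} from the global fractional Calder\'on--Zygmund theory developed in \cite{AFLY-2021}, to which we refer for the complete argument; here we only indicate its structure. Let $u$ be the (unique) weak solution of \eqref{Poisson}, and split the estimate into an \emph{interior} and a \emph{boundary} contribution by means of a fixed cut-off $\eta \in C_c^\infty(\Omega)$, writing $u = \eta u + (1-\eta) u$.

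\emph{Interior estimate.} On $\{\eta = 1\} \Subset \Omega$ one has $(-\Delta)^s u = h \in L^m$, and after subtracting the ($s$-harmonic, hence locally smooth) correction that accounts for the values of $u$ away from this set, $\eta u$ behaves like the Riesz potential $I_{2s}h$. The mapping properties of $I_{2s}$ in the Bessel potential scale --- $I_{2s}\colon L^m \to L^{2s,m}$, followed by the Sobolev embeddings $L^{2s,m} \subset L^{\sigma,p}$ in the range $m < N/(2s)$ --- then give a gain of essentially $2s$ derivatives, which is more than enough since $2s > t$; the only restriction on $p$ in this part is the Sobolev integrability of the datum $h$.

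\emph{Boundary estimate.} This is the genuinely delicate point, and the reason the global exponent $\widetilde p$ is finite. Near $\partial\Omega$ no gain of $2s$ derivatives is possible: by Ros-Oton--Serra type boundary regularity, the optimal behavior is $|u(x)| \lesssim \dist(x,\partial\Omega)^s$, with $u/\dist(\cdot,\partial\Omega)^s$ H\"older up to $\partial\Omega$. The model profile $x \mapsto \dist(x,\partial\Omega)^s$ belongs to $W^{t,p}$ in a boundary collar precisely when $tp < sp + 1$, i.e. $p < 1/(t-s)^+$; quantifying this --- via a dyadic decomposition of a neighbourhood of $\partial\Omega$ and the fractional Hardy inequality, controlling $(1-\eta)u$ by its boundary profile --- yields the factor $1/(t-s)^+$ in $\widetilde p$, while its interplay with the Sobolev integrability of $h$ produces the other branch $\tfrac{mN}{N-ms+mN(t-s)}$. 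Combining the interior and boundary bounds gives \eqref{CZeq} for all $1 < p < \widetilde p$; the two estimates there (in $L^{t,p}$ and in $W^{t,p}$) are proved in parallel, or deduced from one another through the embeddings $L^{\sigma+\epsilon,p} \subset W^{\sigma,p} \subset L^{\sigma-\epsilon,p}$ recalled above, adjusting $p$ slightly within the admissible range.

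\emph{Main obstacle.} The hard part is the sharp up-to-the-boundary estimate valid in the whole range $s \in (0,1)$, and in particular for $s \leq 1/2$, where the boundary layer $\dist(\cdot,\partial\Omega)^s$ is less regular and the argument available for $s > 1/2$ (used in \cite{AF-2020}) no longer applies; handling it requires the refined barriers and the careful bookkeeping of exponents of \cite{AFLY-2021}. Granting that analysis, Proposition~\ref{CZ} is precisely \cite[Theorems 1.3 and 5.2, Corollaries 5.3--5.6]{AFLY-2021}.
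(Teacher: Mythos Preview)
Your proposal is correct and matches the paper's treatment: the paper gives no proof at all of Proposition~\ref{CZ}, simply stating that ``such a result is contained in \cite[Theorems 1.3 and 5.2, Corollaries 5.3--5.6]{AFLY-2021}'', which is precisely the citation you conclude with. Your additional sketch of the interior/boundary mechanism and the role of the model profile $\delta^s$ in producing the threshold $1/(t-s)^+$ goes beyond what the paper itself provides, but is consistent with the structure visible in Lemma~\ref{declem} (also imported from \cite{AFLY-2021}), where the decomposition \eqref{deco} into $g_1,g_2,g_3$ and the boundary weight $\delta^{-(t-s)}$ encodes exactly this split.
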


We also present here a technical but useful lemma proved in \cite[Lemma 5.1]{AFLY-2021} and a classical result from harmonic analysis.

\begin{lemma} \label{declem}
Let $s \in (0,1)$, $s \leq t < \min\{1,2s\}$ and and let $u$ be the (unique) weak solution to \eqref{Poisson}  with $h \in L^1(\Omega)$. Then, there exists $C := C(N, s, t,  \Omega) > 0$  such that
\begin{equation} \label{deco}
\big| (-\Delta)^{\frac{t}{2}} u (x)| \leq C \left[ g_1(x) + |\log \delta(x)| g_2(x) + \frac{1}{\delta^{t-s}(x)} g_3(x) \right], \quad \mbox{ for a.e. } x \in \Omega.
\end{equation}
Here, $\delta(x) := \dist(x,\partial \Omega)$ and the functions $g_i$, $i = 1,2,3$, satisfy: 
\begin{align} \label{g1}
& \bullet\ \mbox{For all $0 < \lambda < 2s-t$, there exists $C:= C(\lambda) > 0$ s.t.
$ \displaystyle
g_1(x) \leq C \int_{\Omega} \frac{|f(y)|}{|x-y|^{N-(2s-t-\lambda)}} dy, 
$} \\
& \bullet\ g_2(x) := \int_{\Omega} \frac{|f(y)|}{|x-y|^{N-(2s-t)}} dy \label{g2} , \\
& \bullet\  g_3  (x) := (t-s) \int_{\Omega} \frac{|f(y)|}{|x-y|^{N-s}} dy.  \label{g3}
\end{align}
Moreover, for any $R \geq \frac{1}{3} + \frac{4}{3} (\diam(\Omega) + \dist(0,\Omega))$, it follows that
\begin{equation} \label{med}
\begin{aligned}
\big| (-\Delta)^{\frac{t}{2}} u(x) \big| 
\leq \int_{\Omega} \frac{|u(y)|}{\delta^s(y)} \frac{dy}{|x-y|^{N+(t-s)}}  , \quad \textup{ for a.e. } x \in B_R(0) \setminus \Omega\,,
\end{aligned}
\end{equation} 
and
\begin{equation} \label{far}
\big| (-\Delta)^{\frac{t}{2}} u(x) \big| \leq \frac{4^{N+t}}{(1+|x|)^{N+t}} \int_{\Omega} |u(y)| dy,
 \quad \textup{ for a.e. } x \in \RN \setminus B_R(0).
\end{equation}
\end{lemma}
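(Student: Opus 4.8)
The plan is to decompose the nonlocal operator $(-\Delta)^{t/2}u$ according to the distance to the boundary and to the behaviour at infinity, exploiting the structure $(-\Delta)^{t/2} = (-\Delta)^{(t-s)/2}\circ(-\Delta)^{s/2}$ together with the pointwise representation of the Green potential of $h \in L^1(\Omega)$. First I would recall that the (unique) weak solution to \eqref{Poisson} admits the representation $u(x) = \int_\Omega G_s(x,y) h(y)\,dy$, where $G_s$ is the Green function of $(-\Delta)^s$ on $\Omega$, together with the standard two-sided estimates $G_s(x,y) \simeq |x-y|^{-(N-2s)} \min\{1, \delta^s(x)\delta^s(y)|x-y|^{-2s}\}$ and the gradient-type bounds for $G_s$ and its fractional derivatives near $\partial\Omega$. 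From these one obtains in particular that $|u(y)| \lesssim \delta^s(y) \int_\Omega |x-y|^{-(N-s)}|h(y)|\,dy$ type controls, which is exactly what feeds the far-field estimates \eqref{med} and \eqref{far}.

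Next, for the interior estimate \eqref{deco}, I would write $(-\Delta)^{t/2}u(x)$ as a principal value integral and split $\RN = B_{\delta(x)/2}(x) \cup (\Omega\setminus B_{\delta(x)/2}(x)) \cup (\RN\setminus\Omega)$. On the small ball $B_{\delta(x)/2}(x)$ one uses the $C^{2s}$ (or $C^{s+\epsilon}$) interior regularity of $u$ coming from the Calderón--Zygmund theory to absorb the singularity of the kernel $|x-y|^{-N-t}$, producing a term controlled by the Riesz potential $g_1$ with a small loss $\lambda$ (this is where the parameter $0<\lambda<2s-t$ enters: it quantifies the sub-optimal Hölder exponent one is willing to pay). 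On the annulus inside $\Omega$ away from the boundary one gets the genuine $(2s-t)$-Riesz potential $g_2$, possibly with the logarithmic factor $|\log\delta(x)|$ arising from integrating $|x-y|^{-N-t}$ against $|x-y|^{2s}$-type bounds on the dyadic scales between $\delta(x)$ and $\diam\Omega$. Finally the contribution from $\RN\setminus\Omega$, where $u\equiv 0$, is $\int_{\RN\setminus\Omega} u(x)|x-y|^{-N-t}\,dy$, and using $|u(x)| \lesssim \delta^s(x)\,(\,\cdot\,)$ together with $\int_{\RN\setminus\Omega}|x-y|^{-N-t}\,dy \lesssim \delta^{-t}(x)$ yields, after factoring $\delta^{s}(x)\delta^{-t}(x) = \delta^{s-t}(x)$, the boundary term $\delta^{-(t-s)}(x) g_3(x)$; the explicit factor $(t-s)$ in the definition of $g_3$ records that this term is absent when $t = s$.

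For the exterior estimates \eqref{med} and \eqref{far}, since $u$ is supported in $\overline\Omega$, for $x\notin\Omega$ one simply has $(-\Delta)^{t/2}u(x) = -c\int_\Omega u(y)|x-y|^{-N-t}\,dy$ (no principal value needed), and then \eqref{med} follows by inserting the bound $|u(y)|\lesssim \delta^s(y)\cdot(\text{something})$; more precisely one rewrites $|x-y|^{-N-t} = |x-y|^{-N-(t-s)}\cdot|x-y|^{-s}$ and absorbs $|x-y|^{-s}$ into the estimate giving $\int_\Omega |u(y)|\delta^{-s}(y)|x-y|^{-N-(t-s)}\,dy$. For $x \in \RN\setminus B_R(0)$ with $R$ chosen so that $|x-y|\geq \tfrac14(1+|x|)$ uniformly for $y\in\Omega$ (this is precisely the role of the lower bound on $R$), one just pulls the kernel out of the integral, producing the $(1+|x|)^{-N-t}\|u\|_{L^1(\Omega)}$ decay with the crude constant $4^{N+t}$.

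The main obstacle I expect is the careful bookkeeping in the interior splitting: proving that the near-diagonal piece really is controlled by $g_1$ requires the right Hölder regularity of $u$ up to (but not including) exponent $2s-t$, and establishing the precise $|\log\delta(x)|$ gain on the intermediate annulus requires summing the dyadic contributions with uniform constants and tracking how the bound degenerates as $x\to\partial\Omega$. Everything else is a matter of Green-function estimates and elementary kernel integrations; the delicate point is ensuring all constants depend only on $N$, $s$, $t$ and $\Omega$ and that the three regimes patch together to give \eqref{deco} with a single constant $C$.
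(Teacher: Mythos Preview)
The paper does not actually prove this lemma: it is quoted from \cite[Lemma 5.1]{AFLY-2021} and cited without argument, so there is no in-paper proof to compare against. That said, your overall architecture (Green-function representation, splitting by distance to the boundary, and the elementary far-field bounds \eqref{med}--\eqref{far}) is the natural one, and your treatment of the exterior estimates is correct.

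There is, however, a genuine gap in your handling of the near-diagonal piece of \eqref{deco}. You propose to control the integral over $B_{\delta(x)/2}(x)$ via ``the $C^{2s}$ (or $C^{s+\epsilon}$) interior regularity of $u$ coming from the Calder\'on--Zygmund theory.'' But the only hypothesis is $h \in L^1(\Omega)$, and with merely integrable data $u$ need not be H\"older continuous, or even locally bounded: if $h$ concentrates near a point $x_0 \in \Omega$ then $u$ behaves like $|x-x_0|^{2s-N}$, which blows up for $N \geq 2$. So the singularity of the kernel cannot be absorbed by regularity of $u$. The correct route---which your first paragraph already hints at with ``gradient-type bounds for $G_s$ and its fractional derivatives''---is to bypass $u$ entirely and estimate the kernel $(-\Delta_x)^{t/2} G_s(x,y)$ pointwise from the sharp two-sided bounds on $G_s$. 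Writing $(-\Delta)^{t/2} u(x) = \int_\Omega \big[(-\Delta_x)^{t/2} G_s(x,y)\big]\, h(y)\,dy$ and showing that the bracketed kernel is dominated by $|x-y|^{-(N-(2s-t))}$, plus a $|\log\delta(x)|$ correction, plus a boundary term $\delta^{-(t-s)}(x)\,|x-y|^{-(N-s)}$, produces $g_1$, $g_2$, $g_3$ directly after integration against $|h|$. You should carry this kernel estimate through the interior piece as well, rather than switching to H\"older regularity of $u$.
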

 
\begin{lemma}\label{Stein}  
Let $\omega \subset \RN$, $N \geq 2$, be an open bounded domain and let $0 < \alpha < N$ and  $1\le p<\ell<\infty$ be such that  $\dfrac{1}{\ell} =\dfrac{1}{p}-\dfrac{\a}{N}$. Moreover, for $g\in L^p(\omega)$, let 
$$
J_\lambda(g)(x): =\int_{\omega} \dfrac{g(y)}{|x-y|^{N-\a}}dy\,.
$$
It follows that there exists  $C= C(N,\a, p , \s > 0, \ell, \O)>0 $  such that 
\begin{itemize}
\item[$a)$] $J_{\alpha}$ is well defined (in the sense that the integral converges absolutely for a.e. $x \in \omega$).
\item[$b)$] $[J_{\alpha}(g)]_{M^{\ell}(\omega)} \leq C\, \|g\|_{L^1(\omega)}.$  In particular, $\|J_{\alpha}(g)\|_{L^{\sigma}(\omega)} \leq C \, \|g\|_{L^1(\omega)}$ for all $1 \leq \sigma < \ell$. 
\item[$c)$] If $1<p<\frac{N}\a$, then $\|J_{\alpha}(g)\|_{L^\ell (\omega)} \leq C\, \|g\|_{L^p(\omega)}$. 
\item[$d)$] If $p = \frac{N}{\alpha}$, then $\|J_{\alpha}(g)\|_{L^{\sigma}(\omega)} \leq C \, \|g\|_{L^p(\omega)}$ for all $1 \leq \sigma < +\infty$.
    \item[$e)$]  If $p>\frac{N}\a$, then $
\|J_{\alpha}(g)\|_{L^{\infty}(\omega)} \leq C\, \|g\|_{L^p(\omega)}.
$  
\end{itemize}
\end{lemma}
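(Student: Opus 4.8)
Proof proposal for Lemma \ref{Stein} (the Riesz potential / harmonic analysis statement).

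The plan is to recognize that $J_\alpha$ is, up to the normalizing constant, the classical Riesz potential $I_\alpha$ restricted to a bounded domain, and then to invoke (and slightly adapt to the bounded-domain setting) the standard mapping properties of Riesz potentials on $L^p$ and on Marcinkiewicz spaces. I would not prove these from scratch; instead I would cite a standard reference such as Stein's \emph{Singular Integrals}, Adams--Hedberg, or Grafakos, and carry out the small bookkeeping needed because $\omega$ is bounded (rather than all of $\RN$) and because we want the $L^1 \to M^\ell$ endpoint with $\ell$ determined by $\frac1\ell = 1 - \frac\alpha N$ read off as $p=1$.

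First I would dispose of part $a)$: for $g \in L^p(\omega)$ with $p \geq 1$, since $\omega$ is bounded the kernel $|x-y|^{-(N-\alpha)}$ lies in $L^{p'}_{\mathrm{loc}}$ near the singularity precisely when $(N-\alpha)p' < N$, but in any case a truncation argument plus the fact that $|x-y|^{-(N-\alpha)}\in L^1(\omega)$ uniformly in $x$ (because $N-\alpha<N$ and $\omega$ is bounded) shows the integral converges absolutely for a.e.\ $x$; this is most cleanly obtained by Young's inequality writing $J_\alpha(g) = g * \big(|\cdot|^{-(N-\alpha)}\mathbf{1}_{B_D}\big)$ with $D=\diam\omega$, the convolution kernel being in $L^1$. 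Next, for part $b)$, the $L^1 \to M^\ell$ (weak-type) bound is exactly the weak-type $(1, N/(N-\alpha))$ estimate for the Riesz potential: one estimates the distribution function of $J_\alpha(g)$ by splitting the kernel at a radius chosen to balance the two contributions, the standard argument giving $|\{x : |J_\alpha(g)(x)| > \tau\}| \leq C(\|g\|_{L^1}/\tau)^{\ell}$; the embedding $M^\ell(\omega) \hookrightarrow L^\sigma(\omega)$ for $\sigma < \ell$ on a bounded domain then yields the stated norm inequality. Parts $c)$ and $d)$ are the Hardy--Littlewood--Sobolev inequality (for $1<p<N/\alpha$, $\frac1\ell = \frac1p - \frac\alpha N$) and its endpoint/limiting case ($p = N/\alpha$, where one gets every finite exponent on a bounded domain, e.g.\ via the $M^\ell$ bound with $\ell=\infty$ interpreted appropriately, or directly by Hölder after splitting the kernel into its $L^{p'}$-near-part and bounded far-part). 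Finally part $e)$, for $p > N/\alpha$, follows from a single application of Hölder's inequality: $|J_\alpha(g)(x)| \leq \|g\|_{L^p(\omega)} \, \big\| |x-\cdot|^{-(N-\alpha)} \big\|_{L^{p'}(\omega)}$, and the latter norm is finite and bounded uniformly in $x$ precisely because $(N-\alpha)p' < N$ and $\omega$ is bounded.

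The only mild obstacle is making the constants depend only on the listed quantities $(N,\alpha,p,\sigma,\ell,\Omega)$ and not on $x$; this is handled throughout by replacing $\omega$ by the ball $B_{\diam\omega}(x)$ whenever a kernel integral $\int_\omega |x-y|^{-\beta}\,dy$ appears, so that all such integrals are controlled by $\int_{B_{\diam\omega}(0)} |z|^{-\beta}\,dz$, a finite constant depending only on $N$, $\beta$ and $\diam\omega \le \diam\Omega$. Since $\omega \subset \RN$ is an arbitrary bounded domain (here it will always be applied with $\omega = \Omega$ or a subset), no regularity of $\partial\omega$ is needed. In short, the lemma is a compendium of classical facts about Riesz potentials, and the proof amounts to citing the Hardy--Littlewood--Sobolev theorem and its weak-type endpoint, adding the elementary Hölder estimate for the supercritical exponent, and noting the $M^\ell \hookrightarrow L^\sigma$ inclusion valid on sets of finite measure.
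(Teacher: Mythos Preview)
Your proposal is correct and matches the paper's approach: the paper's proof is just a citation---parts $a)$--$d)$ to Stein's \emph{Singular Integrals} (Chapter V, \S1.2, Theorem I) and part $e)$ to Gilbarg--Trudinger, Lemma 7.12---so your recognition that this is the classical Riesz-potential package (weak-type endpoint, Hardy--Littlewood--Sobolev, and the supercritical H\"older estimate) with trivial bounded-domain bookkeeping is exactly right. If anything, your sketches of the individual arguments are more explicit than what the paper provides.
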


\begin{proof} 
Parts a), b), c) and d) follow from \cite[Theorem I, Section 1.2, Chapter V, page 119]{Stein}. Part e) is contained in \cite[Lemma 7.12]{G_T_2001_S_Ed} (see also \cite[Theorem 2.2]{mizuta}).
\end{proof}

\bigbreak

We conclude this section proving a compactness result for the fractional Poisson equation \eqref{Poisson} that will be key in the proof of our main existence results. Here, we denote by   $G_s : \R_{\ast}^{2N} \to \R$ the Green function associated to $(-\Delta)^{s}$ in $\Omega$ with homogeneous Dirichlet boundary conditions. Note that $\R_{\ast}^{2N}:= \{(x,y) \in \R^{2N}: x \neq y \}$.

\begin{proposition} \label{solMap}
Let $0 < t < \min\{1,s(1+N^{-1})\}$ and $1 < p < N/(N(1+t-s)-s)$. The solution map 
$$
\mathbb{G}_s : L^1(\Omega) \to L_0^{t,p}(\Omega), \qquad h \mapsto \mathbb{G}_s[h] := \int_{\Omega} G_s(x,y) h(y)\, dy, 
$$ 
is well-defined, continuous and compact. 
\end{proposition}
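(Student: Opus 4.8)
The plan is to read off well-posedness and continuity directly from the Calder\'on--Zygmund estimate of Proposition~\ref{CZ}, and then to upgrade continuity to compactness by noting that the \emph{same} estimate is available with a slightly larger smoothness exponent; this gain of a bit of fractional differentiability, combined with a Rellich--Kondrachov theorem and an interpolation inequality for Bessel potential spaces, will force convergence in $L_0^{t,p}(\Omega)$.

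First I would check that $\mathbb{G}_s$ is well defined and continuous. For $h\in L^1(\Omega)$ the integral defining $\mathbb{G}_s[h]$ converges absolutely a.e., since $0\le G_s(x,y)\le C|x-y|^{2s-N}$ with $N-2s\in(0,N)$, and $\mathbb{G}_s[h]$ is precisely the unique weak solution to \eqref{Poisson} with datum $h$ in the sense of \cite{C_V_2014-JFA,C_V_2014-JDE}; in particular it vanishes on $\RN\setminus\Omega$. When $t\ge s$, the condition $1<p<\tfrac{N}{N(1+t-s)-s}$ is exactly $1<p<\widetilde{p}(1,s,t)$, so Proposition~\ref{CZ} with $m=1$ gives $\mathbb{G}_s[h]\in L^{t,p}(\RN)$ and $\|\mathbb{G}_s[h]\|_{L_0^{t,p}(\Omega)}=\|(-\Delta)^{t/2}\mathbb{G}_s[h]\|_{L^p(\RN)}\le\|\mathbb{G}_s[h]\|_{L^{t,p}(\RN)}\le\widetilde{C}\,\|h\|_{L^1(\Omega)}$; when $t\le s$ the same bound holds and is easier to obtain (there is no boundary singularity of $(-\Delta)^{t/2}\mathbb{G}_s[h]$ to control). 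Since $\mathbb{G}_s$ is linear, this bound is exactly continuity.

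For compactness, let $(h_n)$ be bounded in $L^1(\Omega)$, say $\|h_n\|_{L^1(\Omega)}\le M$, and set $u_n:=\mathbb{G}_s[h_n]$. Since $t<\min\{1,s(1+N^{-1})\}$ strictly and $\tau\mapsto\widetilde{p}(1,s,\tau)$ depends continuously on $\tau$, I can fix $t'$ with $t<t'<\min\{1,s(1+N^{-1})\}$ and still $p<\widetilde{p}(1,s,t')$; the previous step applied at level $t'$ then yields the $n$-uniform bound $\|u_n\|_{L^{t',p}(\RN)}\le\widetilde{C}M$, with all $u_n$ vanishing off $\Omega$. By the embeddings recalled earlier in this section, $L^{t',p}(\RN)\subset W^{\sigma,p}(\RN)$ for any $\sigma<t'$, so, fixing $t<\sigma<t'$, the sequence $(u_n)$ is bounded in $W^{\sigma,p}(\RN)$ and supported in the fixed compact set $\overline{\Omega}$; the fractional Rellich--Kondrachov theorem (say on a ball $B\supset\supset\Omega$, together with $u_n\equiv 0$ off $\Omega$) gives a subsequence, not relabelled, with $u_n\to u$ in $L^p(\RN)$ and $u\equiv 0$ off $\Omega$. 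Finally I would interpolate between $L^p$ and $L^{t',p}$, i.e.\ use $\|v\|_{L^{t,p}(\RN)}\le C\,\|v\|_{L^p(\RN)}^{1-t/t'}\|v\|_{L^{t',p}(\RN)}^{t/t'}$ for $1<p<\infty$: combined with the uniform $L^{t',p}$-bound this gives $\|u_n-u_m\|_{L_0^{t,p}(\Omega)}\le\|u_n-u_m\|_{L^{t,p}(\RN)}\le C(2\widetilde{C}M)^{t/t'}\|u_n-u_m\|_{L^p(\RN)}^{1-t/t'}\to 0$, so $(u_n)$ is Cauchy in the Banach space $L_0^{t,p}(\Omega)$ and converges there; hence $\mathbb{G}_s$ is compact.

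The conceptual core --- gain of fractional smoothness through Calder\'on--Zygmund, then compact embedding plus interpolation --- is robust, so the delicate points are bookkeeping: that $t'$ can be chosen as claimed (this is exactly where the strict inequality on $t$ in \eqref{A1} enters, through continuity of $\widetilde{p}(1,s,\cdot)$); the precise statement of fractional Rellich--Kondrachov for functions vanishing outside a fixed bounded set; the interpolation inequality for Bessel potential spaces; and the degenerate cases of the definition of $\widetilde{p}$ (when $N(1+t-s)-s\le 0$, where $p$ ranges over all of $(1,\infty)$, and when $t\le s$, where Proposition~\ref{CZ} must be replaced by the classical $L^1$-regularity for \eqref{Poisson}). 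I expect this last piece of casework to be the main nuisance; none of it is deep.
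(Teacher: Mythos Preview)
Your argument is correct and takes a genuinely different route from the paper's. The paper also reads off well-definedness and continuity from Proposition~\ref{CZ}, and it also reduces to the case $s\le t$ (handling $t<s$ by an interpolation argument, as you do informally). For compactness, however, the paper does \emph{not} exploit the extra room in the smoothness parameter. Instead it works pointwise: it invokes the explicit decomposition of $(-\Delta)^{t/2}\mathbb{G}_s[h]$ provided by Lemma~\ref{declem} (three pieces controlled by Riesz-type potentials of $|h|$, weighted by powers of $\delta$ and $|\log\delta|$), proves by hand that the Riesz potential $T_\alpha:L^1(\Omega)\to L^\gamma(\Omega)$ is compact via mollification and Arzel\`a--Ascoli, and then splits $\RN$ into $\Omega$, $B_R\setminus\Omega$ and $\RN\setminus B_R$, combining the compactness of $T_\alpha$, H\"older's inequality, and the tail estimates \eqref{med}--\eqref{far} to force $(-\Delta)^{t/2}u_n\to(-\Delta)^{t/2}u$ in $L^p(\RN)$ region by region.

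Your approach is cleaner and shorter: once one realizes that Proposition~\ref{CZ} already yields a uniform $L^{t',p}$ bound for some $t'>t$, the compactness is a soft consequence of Rellich--Kondrachov plus the interpolation $\|v\|_{L^{t,p}}\le C\|v\|_{L^p}^{1-t/t'}\|v\|_{L^{t',p}}^{t/t'}$, and Lemma~\ref{declem} is not needed at all. The paper's approach, by contrast, is more self-contained (it does not appeal to interpolation of Bessel spaces) and makes the mechanism of compactness explicit through the kernel; it also isolates the compactness of $T_\alpha$, which may be useful elsewhere. Both arguments treat the case $t<s$ by reduction/interpolation rather than directly, so your caveat about that bookkeeping is apt and matches the paper's own level of detail.
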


\begin{proof}
First of all, note that without loss of generality we can assume that $s \leq t < \min\{1,s(1+N^{-1})\}$. Indeed, once we have the result in this case, we can infer the result for $t \in (0,s)$ interpolating as in the proof of \cite[Proposition 3.10]{AF-2020}. Let us also stress that, by Proposition \ref{CZ}, the solution map $\mathbb{G}_s$ is well-defined and continuous for all $1 < p < N/(N(1+t-s)-s)$. Hence, we just have to show that $\mathbb{G}_s$ is compact for the same range of $p$. 

Let $(h_n)_n \subset L^1(\Omega)$ be a sequence such that $\|h_n\|_{L^1(\Omega)} \leq 1$ for all $n \in \N$ and let $u_n = \mathbb{G}_s(h_n)$ for all $n \in \N$. 
By \cite[Proposition 2.6]{C_V_2014-JDE} we know that, up to a subsequence, $(u_n)_n$ is strongly convergent in $L^q(\Omega)$ for all $1 \leq q < N/(N-2s)$. Moreover, combining this strong convergence with Vitali's convergence theorem, we deduce that, up to a subsequence, $(u_n/\delta^{s})_n$ is strongly convergent in $L^r(\Omega)$ for all $1 \leq r < N/(N-s)$. Having at hand the (up to a subsequence) strong convergence of $(u_n)_n$ in $L^q(\Omega)$ for all $1 \leq q < N/(N-2s)$, to end the proof, we just have to show that, up to a subsequence, $((-\Delta)^{\frac{t}{2}}u_n)_n$ is strongly convergent in $L^p(\RN)$ for all $1 < p < N/(N(1+t-s)-s)$.

%We are now left to prove the compactness of the solution map in $L_0^{t,p}(\Omega)$. 
%Having at hand the strong convergence of $(u_n)_n$ in $L^q(\Omega)$ for all $1 \leq q < N/(N-2s)$, to conclude the proof, we just have to show that $((-\Delta)^{\frac{t}{2}}u_n)_n$ is strongly convergent in $L^p(\RN)$ for all $1 < p < N/(N(1+t-s)-s)$. 
First,  for $0 < \alpha < 2$, let us  consider the integral operator
$$
T_{\alpha}: L^1(\Omega) \to L^{\gamma}(\Omega), \qquad g \mapsto \int_{\RN} \frac{g(y) \mathds{1}_{\Omega}(y)}{|x-y|^{N-\alpha}} dy.
$$
Note that, by \cite[Lemma 7.12]{G_T_2001_S_Ed}, $T_{\alpha}$ is well-defined and continuous for all $1 \leq \gamma < N/(N-\alpha)$. Moreover, following step by step the proof of \cite[Theorem 2.2]{SSII2018}, one can prove that $T_{\alpha}$ is compact for all $1 \leq \gamma < N/(N-\alpha)$. We only sketch the proof. Let $(g_n)_n \subset L^1(\Omega)$ be a sequence such that $\|g_n\|_{L^1(\Omega)} \leq 1$ for all $n \in \N$, $v_n := T_{\alpha}(g_n)$ for all $n \in \N$ and $1 \leq \gamma < N/(N-\alpha)$. If we prove the existence of a  (not relabelled) subsequence $(v_n)_n$ that is Cauchy in $L^{\gamma}(\Omega)$, the compactness immediately follows. Let $\eta_{\epsilon}$ be a standard mollifier and let $v_n^{\epsilon}:= v_n \star \eta_{\epsilon}$ for all $\epsilon > 0$ and all $n \in \N$. Following \cite[Theorem 2.2]{SSII2018}, we obtain that, for all $1 \leq \gamma < N/(N-\alpha)$, there exist constants $C > 0$ and $\sigma > 0$ (independent of $\epsilon$ and $n$) such that
\begin{equation} \label{aa}
\|v_n^{\epsilon}- v_n\|_{L^{\gamma}(\Omega)} \leq C \epsilon^{\sigma}.
\end{equation}
On the other hand, using the standard properties of the mollifiers $\eta_{\epsilon}$, we get that, for any $\epsilon > 0$, the sequence $(v_n^{\epsilon})_n$ is bounded and equicontinuous in $C(\RN)$. Thus, Arzel\`a-Ascoli Theorem implies the existence of a subsequence uniformly convergent in $\overline{\Omega}$. Combining \eqref{aa} with the uniform convergence in $\overline{\Omega}$, a standard diagonal argument shows the existence of a (not relabelled) Cauchy subsequence $(v_n)_n$ in $L^{\gamma}(\Omega)$, as desired.

Having at hand the compactness of $T_{\alpha}$ and Lemma \ref{declem} we now prove that, up to a subsequence, $((-\Delta)^{\frac{t}{2}}u_n)_n$ is strongly convergent in $L^{p}(\RN)$ for all $1 \leq p < N/\big(N(1+t-s)-s\big)$. To that end, we fix  $R \geq  \frac13 + \frac43 \big(\diam(\Omega) + \dist(0,\Omega)\big)$ as in Lemma \ref{declem} and
split $\RN$ in three regions: $\Omega$, $B_R(0) \setminus \Omega$ and $\RN \setminus B_R(0)$. 

Combining \eqref{deco} with the linearity of the problem \eqref{Poisson}, the compactness of $T_{\alpha}$ and H\"older inequality, we get that, up to a subsequence, $((-\Delta)^{\frac{t}{2}} u_n)_n$ is strongly convergent in $L^{p}(\Omega)$ for all $1 \leq p < N/\big(N(1+t-s)-s\big)$. 

Next, we deal with the strong convergence of 
$((-\Delta)^{\frac{t}{2}}u_n)_n$ in $L^{p}(B_R(0) \setminus \Omega)$ for all $1 \leq p < N/(N(1+t-s)-s)$. Since $|x-y| \geq \max\{\delta(y),\delta(x)\}$ for all $y \in \Omega$ and $x \in B_R(0) \setminus \Omega$, by \eqref{med} we have  that, for all $\epsilon > 0$,  
$$
\big| (-\Delta)^{\frac{t}{2}} u_n (x) \big| \leq \int_{\Omega} \frac{|u_n (y)|}{|x-y|^{N+t}} dy \leq \frac{1}{\delta^{t-s+\epsilon}(x)} \int_{\Omega} \frac{|u_n (y)|}{\delta^s(y)} \frac{dy}{|x-y|^{N-\epsilon}} \quad \textup{ for a.e. } x \in \RN \setminus B_R(0)\,.
$$
Combining this inequality with Lemma \ref{Stein}, H\"older inequality and the (up to a subsequence) strong convergence of $(u_n/\delta^{s})_n$ in $L^r(\Omega)$ for all $1 \leq r < N/(N-s)$, we get the desired convergence in $L^p(B_R(0) \setminus \Omega)$. 

Finally, the (up to a subsequence) strong convergence of 
$((-\Delta)^{\frac{t}{2}}u_n)_n$ in $L^{p}(\RN \setminus B_R (0))$ for all $1 \leq p < N/(N(1+t-s)-s)$ follows from \eqref{far}. Indeed, combining \eqref{far} with the (up to a subsequence) strong convergence of $(u_n)_n$ in $L^q(\Omega)$ for all $1 \leq q < N/(N-2s)$, we get the desired convergence in $L^p(\RN \setminus B_R(0))$.
\end{proof}

\begin{corollary} \label{solMapSob}
Let $0 < t < \min\{1,s(1+N^{-1})\}$ and $1 < p < N/(N(1+t-s)-s)$. The solution map 
$$
\mathbb{G}_s : L^1(\Omega) \to W_0^{t,p}(\Omega), \qquad h \mapsto \mathbb{G}_s[h] := \int_{\Omega} G_s(x,y) h(y)\, dy, 
$$ 
is well-defined, continuous and compact. 
\end{corollary}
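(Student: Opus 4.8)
The plan is to deduce Corollary \ref{solMapSob} directly from Proposition \ref{solMap} together with the Bessel potential / fractional Sobolev embeddings recalled earlier in this section, namely \eqref{emb2} and the inclusion $L^{\sigma+\epsilon,p}(\RN) \subset W^{\sigma,p}(\RN) \subset L^{\sigma-\epsilon,p}(\RN)$ valid for $0 < \epsilon < \sigma < 1$ and $1 < p < +\infty$. The point is that the map $\mathbb{G}_s$ in the corollary is literally the same map as in Proposition \ref{solMap}, only with a different (but comparable) target norm, so well-definedness, continuity and compactness should transfer along the embedding.

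First I would fix $0 < t < \min\{1, s(1+N^{-1})\}$ and $1 < p < N/(N(1+t-s)-s)$. The strict inequality defining the admissible range of $p$ is what gives room to maneuver: since $p < N/(N(1+t-s)-s) = \widetilde{p}(m,s,t)$-type threshold is open, I can pick $\epsilon > 0$ small enough that $t + \epsilon < \min\{1, s(1+N^{-1})\}$ still holds and, simultaneously, $p < N/(N(1+(t+\epsilon)-s)-s)$ remains true (the right-hand side is continuous and decreasing in $\epsilon$, hence staying below it for small $\epsilon$ is possible precisely because the original inequality was strict). Then Proposition \ref{solMap} applies with $t$ replaced by $t+\epsilon$: the map $\mathbb{G}_s : L^1(\Omega) \to L_0^{t+\epsilon,p}(\Omega)$ is well-defined, continuous and compact. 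Now the embedding $L^{t+\epsilon,p}(\RN) \subset W^{t,p}(\RN)$ (from \cite[Theorem 7.63, (g)]{adams} as recalled above) is continuous, and it clearly maps $L_0^{t+\epsilon,p}(\Omega)$ into $W_0^{t,p}(\Omega)$ since the property of vanishing a.e. outside $\Omega$ is preserved. Composing a compact operator with a continuous (linear) one yields a compact operator, so $\mathbb{G}_s : L^1(\Omega) \to W_0^{t,p}(\Omega)$ is compact; continuity and well-definedness follow from the same composition (or directly from the second estimate in \eqref{CZeq} of Proposition \ref{CZ}).

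The only genuinely delicate point — and the step I expect to require the most care — is the bookkeeping showing that the auxiliary parameter $t+\epsilon$ can indeed be chosen to satisfy both constraints at once; this is where one uses that all the relevant inequalities bounding $p$ and $t$ are strict, so that a small perturbation stays within the admissible region. Everything else is soft: the preservation of the boundary condition under the embedding, and the fact that $K \circ L$ is compact whenever $K$ is compact and $L$ is bounded linear. Alternatively, and perhaps more cleanly, one can bypass the perturbation trick entirely by invoking Proposition \ref{CZ} to get continuity of $\mathbb{G}_s$ into $W_0^{t,p}(\Omega)$ directly, and then for compactness use the embedding $W^{t,p}(\RN) \subset L^{t-\epsilon,p}(\RN)$ combined with Proposition \ref{solMap} — but that gives compactness into $L_0^{t-\epsilon,p}$, not into $W_0^{t,p}$, so the perturbation-upward argument of the previous paragraph is the one that actually does the job. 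I would therefore present the proof as: choose $\epsilon$ small, apply Proposition \ref{solMap} at level $t+\epsilon$, and compose with the continuous embedding $L_0^{t+\epsilon,p}(\Omega) \hookrightarrow W_0^{t,p}(\Omega)$.
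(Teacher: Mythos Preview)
Your proposal is correct and is essentially the natural way to prove the corollary: the paper itself gives no details beyond referring to the proof of \cite[Corollary 5.6]{AFLY-2021}, and the perturbation-upward argument you describe (apply Proposition \ref{solMap} at level $t+\epsilon$ and compose with the continuous embedding $L_0^{t+\epsilon,p}(\Omega)\hookrightarrow W_0^{t,p}(\Omega)$) is precisely the standard route from the Bessel-potential statement to the fractional Sobolev one. Your bookkeeping on the simultaneous choice of $\epsilon$ is fine; just keep in mind the paper's convention $1/a^{+}=+\infty$ when $a\le 0$, so that when $N(1+t-s)-s\le 0$ the upper bound on $p$ is $+\infty$ and the monotonicity argument in $\epsilon$ still goes through verbatim.
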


\begin{proof}
Having at hand Proposition \ref{solMap} the result follows arguing as in the proof of \cite[Corollary 5.6]{AFLY-2021}
\end{proof}

 \begin{remark}  $ $
\begin{itemize}
\item[$\circ$]
In the case where $1/2 < t < \min\{1,s(1+N^{-1})\}$, Proposition \ref{solMap} and Corollary \ref{solMapSob} can be proved arguing as in the proof of \cite[Proposition 3.10]{AF-2020}.
\item[$\circ$] We believe Proposition \ref{solMap} and Corollary \ref{solMapSob} are of independent interest and will be useful elsewhere. 
\end{itemize}
\end{remark}

\section{Existence results} \label{existenceResults}

\noindent This section is devoted to prove existence results for \eqref{KPZ} with the different choices of $\mathbb{D}_t$ present in the introduction. We will analyse in parallel $\mathbb{D}_t = (-\Delta)^{\frac{t}{2}}$ and $\mathbb{D}_t = \nabla^t$ and separately $\mathbb{D}_t = \mathcal{D}_t$. Let us emphasize that the main existence result stated in the introduction, namely Theorem \ref{exist-intro}, immediately follows from the results of this section. 

We first analyse the case $\mathbb{D}_t = (-\Delta)^{\frac{t}{2}}$ (the case $\mathbb{D}_t = \nabla^t$ follows arguing on the exact same way, as we will detail later on). More precisely, for $q \in (1,+\infty)$, we analyse the existence of weak solution to
\begin{equation}  \tag{KPZ$_1$}\label{KPZ1} 		
\left\{ 		
\begin{aligned} 			
(-\Delta)^s u & = \mu(x) |(-\Delta)^{\frac{t}{2}}u|^q + \lambda f(x), && \quad \textup{ in } \Omega,\\ 
u & = 0, && \quad \textup{ in }   \RN \setminus \Omega.\\
\end{aligned} 		
\right. 	
\end{equation}

%Our main results concerning \eqref{KPZ1} read as follows :

%\begin{theorem} \label{th32}
%Assume that $q \in (1,+\infty)$, $0 < t \leq s$, $f \in L^m(\Omega)$ for some $m > N(q-1)/(sq)$ and $\mu \in L^{\infty}(\Omega)$. Then, there exists $\lambda_{\star} > 0$ such that, for all $0 < \lambda \leq \lambda_{\star}$, \eqref{KPZ1} has a weak solution $u$. Moreover:
%\begin{itemize}
%\item[$\circ$] If $m \geq N/s$, then $u \in W^{s,p}(\RN) \cap C^{0,s}(\RN)$ for all $1 < p < + \infty$. 
%\item[$\circ$] If $1 \leq m < N/s$, then $u\in W^{s,p}(\RN)$ for all $1 < p < mN/(N-ms)$.
%\end{itemize}
%\end{theorem}

%\begin{theorem} \label{th33}
%Assume that $q \in (1,+\infty)$, $0 < s < t < \min\{1,s(1+(qN)^{-1}\}$, $f \in L^m(\Omega)$ for some $m > N(q-1)/(q(s-N(t-s)))$ and $\mu \in L^{\infty}(\Omega)$.  Then, there exists $\lambda_{\star} > 0$ such that, for all $0 < \lambda \leq \lambda_{\star}$, \eqref{KPZ1} has a weak solution $u$. Moreover:
%\begin{itemize}
%\item[$\circ$] If $m \geq N/s$, then $u \in W^{s,p}(\RN) \cap C^{0,s}(\RN)$ for all $1 < p < + \infty$. 
%\item[$\circ$] If $1 \leq m < N/s$, then $u\in W^{s,p}(\RN)$ for all $1 < p < mN/(N-ms)$.
%\end{itemize}
%\end{theorem}

\noindent Let us impose $0 < t < \min\{1,s(1+N^{-1})\}$ and set
\begin{equation}
\overline{q}(m,s,t):= \left\{ \begin{aligned}
& +\infty, && \mbox{ if } t\leq s\  \mbox{ and }\ m \geq  N/s,\\
&  s/(N(t-s)), \qquad &&\mbox{ if } t>s\ \mbox{ and }\ m > N/s,\\
& N/(N-ms), && \mbox{ if } t \leq s \ \mbox{ and } 1 \leq m < N/s,\\
& N/(N-sm + mN(t-s)),\quad &&\mbox{ if } t > s \ \mbox{ and }1 \leq m \leq N/s. 
\end{aligned}
\right.
\end{equation}
Having at hand $\overline{q} \in (1,+\infty]$, our main result concerning \eqref{KPZ1} reads as follows:

\begin{theorem} \label{KPZ1existence}
Assume that $0 < t < \min\{1,s(1+N^{-1})\}$, $f \in L^m(\Omega)$ for some $m \geq 1$ and $\mu \in L^{\infty}(\Omega)$. Then, for all $1 < q < \overline{q}$, there exists $\lambda_{\star} > 0$ such that, for all $0 < \lambda \leq \lambda_{\star}$, \eqref{KPZ1} has a weak solution $u$. Moreover:
\begin{itemize}
\item[$\circ$] If $m \geq N/s$, then $u \in W^{s,p}(\RN) \cap C^{0,s}(\RN)$ for all $1 < p < + \infty$. 
\item[$\circ$] If $1 \leq m < N/s$, then $u\in W^{s,p}(\RN)$ for all $1 < p < mN/(N-ms)$.
\end{itemize}
\end{theorem}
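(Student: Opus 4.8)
The plan is to set up a fixed-point scheme for the solution map of the fractional Poisson equation composed with the nonlinearity, and to invoke Schauder's fixed-point theorem on a suitable closed ball, using the compactness provided by Corollary \ref{solMapSob}. Concretely, given $v \in W_0^{t,p}(\Omega)$ (for a $p$ to be chosen in the admissible range $1 < p < N/(N(1+t-s)-s)$, equivalently $1 < p < \widetilde{p}$ with the correspondence coming from the definition of $\widetilde p$ and $\overline q$), set $h_v := \mu(x)\,|(-\Delta)^{\frac t2}v|^q + \lambda f(x)$ and define $\mathcal{T}(v) := \mathbb{G}_s[h_v]$. The first task is to check that $h_v \in L^1(\Omega)$: since $v \in W_0^{t,p}(\Omega)$ one has $(-\Delta)^{\frac t2}v \in L^p(\RN)$ (for $t \geq s$ this is the content of the Bessel-potential/Sobolev identifications recalled in Section 2; for $t < s$ one interpolates as in \cite[Proposition 3.10]{AF-2020}), hence $|(-\Delta)^{\frac t2}v|^q \in L^{p/q}(\Omega)$, and one needs $p/q \geq 1$, i.e. $q \leq p$; together with $f \in L^m(\Omega) \subset L^1(\Omega)$ this gives $h_v \in L^1(\Omega)$ and $\mathcal{T}$ is well-defined with values in $W_0^{t,p}(\Omega)$ by Corollary \ref{solMapSob}.

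The second task is the \emph{a priori} bound producing the invariant ball. Using Corollary \ref{solMapSob} (or Proposition \ref{CZ}) we get, for some constant $\widetilde C$,
\[
\|\mathcal{T}(v)\|_{W_0^{t,p}(\Omega)} \leq \widetilde C\,\|h_v\|_{L^1(\Omega)} \leq \widetilde C\Big( \|\mu\|_{L^\infty(\Omega)}\, \big\||(-\Delta)^{\tfrac t2}v|^q\big\|_{L^1(\Omega)} + \lambda\,\|f\|_{L^1(\Omega)}\Big) \leq \widetilde C\Big( \|\mu\|_{L^\infty}\,|\Omega|^{1-\frac1p}\,\|v\|_{W_0^{t,p}}^{\,q} + \lambda\,C_\Omega\,\|f\|_{L^m}\Big),
\]
where in the last step I applied Hölder on $\Omega$ to pass from $L^1$ to the $L^p$-type norm of $(-\Delta)^{\frac t2}v$ raised to the power $q$, and the equivalence of norms on $W_0^{t,p}(\Omega)$. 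Writing $\Phi(r) = \widetilde C\,\|\mu\|_{L^\infty}|\Omega|^{1-1/p} r^q + \widetilde C\lambda C_\Omega\|f\|_{L^m}$, the inequality $\Phi(r) \leq r$ has a solution $r = R > 0$ precisely when $\lambda$ is small enough: since $q > 1$, the function $r \mapsto r - \widetilde C\|\mu\|_\infty|\Omega|^{1-1/p} r^q$ attains a positive maximum $M_0$ over $r > 0$, and one may take any $\lambda \leq \lambda_\star := M_0/(\widetilde C C_\Omega \|f\|_{L^m})$, with $R$ the corresponding smaller root. Thus $\mathcal{T}$ maps the closed ball $\overline{B}_R \subset W_0^{t,p}(\Omega)$ into itself. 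This ball is convex and closed; $\mathcal{T}$ is continuous (composition of the continuous Nemytskii-type map $v \mapsto h_v$ from $W_0^{t,p}$ into $L^1$ — continuity here follows from strong $L^p$-convergence of $(-\Delta)^{\frac t2}v_n$ implying, up to subsequences and a.e. convergence, $L^1$-convergence of $|(-\Delta)^{\frac t2}v_n|^q$ via a generalized dominated convergence / Vitali argument — with the continuous linear map $\mathbb{G}_s$) and compact (because $\mathbb{G}_s$ is compact by Corollary \ref{solMapSob}). Schauder's fixed-point theorem yields $u \in \overline{B}_R$ with $\mathcal{T}(u) = u$, which is readily seen to be a weak solution in the sense of Definition \ref{weak Sol KPZ1} (testing the Green representation against $\phi \in \mathbb{X}^s(\Omega)$ and using the self-adjointness of $G_s$).

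The final task is the \emph{additional regularity} of the fixed point. Once $u$ is a solution with $h_u = \mu|(-\Delta)^{\frac t2}u|^q + \lambda f \in L^{p/q}(\Omega)$ and $p/q > 1$ can be arranged (one should actually choose $p$ as large as the range permits so that $p/q$ sits in a good Lebesgue class), a bootstrap using Proposition \ref{CZ} improves integrability: if $m \geq N/s$ one can iterate until reaching $h_u \in L^{m'}(\Omega)$ with $m' > N/s$, whence $u \in W^{s,p}(\RN)$ for every $p < \infty$ and, by fractional Sobolev/Morrey embedding (as in \cite{AFLY-2021}), $u \in C^{0,s}(\RN)$; if $1 \leq m < N/s$ the bootstrap stabilizes at the scaling-critical exponent and gives $u \in W^{s,p}(\RN)$ for all $1 < p < mN/(N-ms)$. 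Care must be taken that the exponent $q < \overline q$ is exactly the threshold making the first iteration step consistent (this is the role of the somewhat intricate definition of $\overline q$, matching $\widetilde p$ and the Sobolev exponent of $f$).

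\textbf{Main obstacle.} I expect the delicate point to be the precise bookkeeping of exponents: verifying that for $q < \overline q$ one can genuinely choose an admissible $p$ with $q \leq p < \widetilde p(m,s,t)$ (and with $p/q$ in the right range for the self-map estimate and for the bootstrap to close), handling separately the four branches in the definition of $\overline q$ according to $t \lessgtr s$ and $m \gtrless N/s$, and — for $t < s$ — justifying that $(-\Delta)^{\frac t2}v \in L^p$ via the interpolation argument of \cite[Proposition 3.10]{AF-2020} so that the whole scheme still runs. The functional-analytic skeleton (Schauder + compactness of $\mathbb{G}_s$ + small-$\lambda$ invariant ball) is otherwise routine.
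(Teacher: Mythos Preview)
Your overall strategy (Schauder on an invariant set, compactness via the solution map, small-$\lambda$ self-map inequality) is the right one and matches the paper. But the specific implementation has a genuine gap that prevents you from reaching the full range $1<q<\overline q$.

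\textbf{The main gap.} Your self-map estimate uses only the $L^1\!\to\!W_0^{t,p}$ bound:
\[
\|\mathcal T(v)\|_{W_0^{t,p}}\le \widetilde C\,\|h_v\|_{L^1(\Omega)}.
\]
Corollary~\ref{solMapSob} (and Proposition~\ref{solMap}) only give compactness and continuity of $\mathbb G_s$ from $L^1(\Omega)$ into $W_0^{t,p}(\Omega)$ (resp.\ $L_0^{t,p}$) for $1<p<\dfrac{N}{N(1+t-s)-s}$. Since your scheme also needs $q\le p$ to place $|(-\Delta)^{t/2}v|^q$ in $L^1$, you are forced into
\[
q<\frac{N}{N(1+(t-s)^+)-s},
\]
which is \emph{strictly smaller} than $\overline q$ in every case where $\overline q>N/(N-s)$ (for instance $t\le s$ and $m\ge N/s$ gives $\overline q=+\infty$, while your bound is $N/(N-s)$; similarly for $m>1$ with $t\le s$, or for $t>s$ with $t-s$ small). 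The bootstrap you mention at the end improves the regularity of a fixed point \emph{once it exists}, but it does not enlarge the range of $q$ for which your invariant-ball argument closes.

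\textbf{What the paper does instead.} The paper decouples the two roles of the function space by working with a \emph{two-norm} set
\[
E_\eta=\bigl\{v\in L_0^{\gamma,1+\eta}(\Omega):\ \|v\|_{L_0^{\gamma,r}(\Omega)}\le \ell^{1/q}\bigr\},\qquad \gamma=\max\{t,s\},
\]
with $1+\eta$ small (so Proposition~\ref{solMap} supplies compactness of $\mathbb G_s:L^1\to L_0^{\gamma,1+\eta}$) and $r$ large, namely $qm<r<\widetilde p(m,s,t)$. The invariant-ball inequality is then run in the $L_0^{\gamma,r}$-norm using the $L^m\!\to\!L_0^{\gamma,r}$ estimate of Proposition~\ref{CZ}:
\[
\|u\|_{L_0^{\gamma,r}}\le \widetilde C\bigl(\|\mu\|_\infty|\Omega|^{\frac{r-qm}{rm}}\|(-\Delta)^{t/2}\varphi\|_{L^r(\Omega)}^q+\lambda\|f\|_{L^m}\bigr),
\]
which is exactly what produces the threshold $\overline q$ (via the requirement $qm<r<\widetilde p(m,s,t)$). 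Continuity of $T_1$ in the weak $L_0^{\gamma,1+\eta}$-topology is then obtained by interpolating between $L_0^{\gamma,1+\eta}$ and $L_0^{\gamma,r}$ (see \eqref{step2ContinuityT1}). A single closed ball in one $W_0^{t,p}$ cannot play both roles simultaneously.

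\textbf{A secondary technical issue.} You assert that $v\in W_0^{t,p}(\Omega)$ implies $(-\Delta)^{t/2}v\in L^p(\RN)$ by ``the Bessel-potential/Sobolev identifications recalled in Section~2''. Those identifications have an $\epsilon$-loss ($L^{\sigma+\epsilon,p}\subset W^{\sigma,p}\subset L^{\sigma-\epsilon,p}$), so this step is not correct as stated for $p\neq 2$. This is why the paper works throughout in the Bessel spaces $L_0^{\gamma,p}$, where $\|v\|_{L_0^{\gamma,p}}=\|(-\Delta)^{\gamma/2}v\|_{L^p(\RN)}$ by definition, and controls $(-\Delta)^{t/2}$ via the embedding \eqref{emb}. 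This point is fixable, but the range-of-$q$ issue above is not without introducing the two-norm set.
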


%\begin{remark}
%It seems possible that, for $t < s$, the number $q_{\star}$ could be chosen as $q_{\star} = N/(N-m(2s-t))$. However, since we believe the most interesting and difficult case is $t \geq s$, we do not explore this direction. 
%\end{remark}

%\begin{theorem} \label{KPZ1tBiggers}
%Assume that $s < t < \min\{1,s(1+N^{-1})\}$, $f \in L^m(\Omega)$ for some $m \geq m_{\star}$ and $\mu \in L^{\infty}(\Omega)$. Then, for all $1 < q < q_{\star}$, there exists $\lambda_{\star} > 0$ such that, for all $0 < \lambda \leq \lambda_{\star}$, there exists a weak solution $u$ to \eqref{KPZ1}. Moreover: 
%\begin{itemize}
%\item[$\circ$] If $m > N/s$, then $u \in W^{t,r}(\RN) \cap C^{\,0,\,t-\frac{N}{r}}(\RN)$ for some $r > N/s$.
%\item[$\circ$] If $m_{\star} \leq m \leq N/s$, then $u\in W^{t,r}(\RN)$ for some $r > qm_{\star}$.
%\end{itemize}
%\end{theorem}

%\begin{proof}[Proof of Theorem \ref{KPZ1sBiggert}]

%\end{proof}

\begin{proof}[Proof of Theorem \ref{KPZ1existence}]
We use some ideas of \cite[Sections 4 and 6]{AF-2020} and consider separately the cases $m > N/s$ and $1 \leq m \leq N/s$. 

\medbreak \noindent \textbf{Case 1:} $m > N/s$. 
\medbreak
First of all, observe that, without loss of generality, we can assume that %\footnote{We use here the convention $1/a^{+} = + \infty$ if $a \leq 0$.} 
$N/s < m < 1/(q(t-s)^{+})$. Then,  let us fix $r = r(m,s,t,q) > 0$ such that
$
1 < qm < r < \frac{1}{(t-s)^{+}}
$
and define
\begin{equation} \label{lambdaStarKPZ1}
\lambda_{\star} := \frac{q-1}{q \|f\|_{L^m(\Omega)}} \Big(q  (\widetilde{C}\widetilde{k})^q \|\mu\|_{L^{\infty}(\Omega)} |\Omega|^{\frac{r-qm}{mr}}\Big)^{-\frac{1}{q-1} },
\end{equation}
with $\widetilde{C} > 0$ as in  Proposition \ref{CZ} and $\widetilde{k}$ as in \eqref{emb}. Since $q > 1$, we know (cf. \cite[Lemma 4.1]{AF-2020}) there exists a unique $\ell \in (0,\infty)$ such that
\begin{equation} \label{lKPZ1}
\widetilde{C} \Big(\|\mu\|_{L^{\infty}(\Omega)}  |\O|^{\frac{r-qm }{m r}} \widetilde{k}^q \ell + \lambda_{\star} \|f\|_{L^{m }(\Omega)} \Big) =\ell^{\frac{1}{q}}. 
\end{equation}
Having at hand $\lambda_{\star}$ and $\ell$, we define 
\begin{equation} \label{setE}
E_{\eta} := \left\{ v \in L^{\gamma,1+\eta}_0(\O): \left\| v \right\|_{ L^{\gamma,r}_0(\O)}\le \ell^{\frac{1}{q}}  \right\},
\end{equation}
with 
\begin{equation} \label{gammaEtaEpsilonKPZ1}
\gamma:= \max\{t,s\}, \quad \textup{ and } 0<\eta< \min \left\{q-1, \dfrac{s-N(\gamma-s)}{N(1+(\gamma-s))-s}\right\},
\end{equation}
and point out that $E_{\eta}$ is a closed  and convex subset of $L_0^{\gamma,1+\eta}(\Omega)$. Moreover $E_{\eta}$ is also bounded in $L_0^{\gamma,1+\eta}(\Omega)$. 
Indeed, for any $R>0$, we have that    
\begin{equation} \label{b0}
\big\| (-\Delta)^{\frac{\gamma}{2}} u \big\|_{L^{1+\eta} (\RN)}
\leq
\big\| (-\Delta)^{\frac{\gamma}{2}} u \big\|_{L^{1+\eta} (B_R(0))}
+
\big\| (-\Delta)^{\frac{\gamma}{2}} u \big\|_{L^{1+\eta} (\RN \setminus B_R(0))}\,.
\end{equation}
Then, observe that, since $r>1+\eta$,  
\begin{equation} \label{b1}
\big\| (-\Delta)^{\frac{\gamma}{2}} u \big\|_{L^{1+\eta} (B_R(0))}
\leq 
C_R \big\| (-\Delta)^{\frac{\gamma}{2}} u \big\|_{L^{r} (B_R(0))} 
\leq 
C_R \big\| (-\Delta)^{\frac{\gamma}{2}} u \big\|_{L^{r} (\RN)} 
\leq C_{R} \, \ell^{\frac{1}{q}}\,. 
\end{equation}
On the other hand, choosing $R \geq \frac13 + \frac43 \big(\diam(\Omega) + \dist(0,\Omega)\big)$, we have that $
|x-y| \geq \frac14 \big(1 + |x|\big) $  for all $ y \in \Omega $ and all $x \in \RN \setminus B_R(0)$ and thus, we get that
\begin{equation} \label{b2}
\begin{aligned}
& \big\|(-\Delta)^{\frac{\gamma}{2}} u \big\|_{L^{1+\eta} (\RN \setminus B_R(0))}^{1+\eta} \leq \int_{\RN \setminus B_R(0)} \left| \int_{\Omega} \frac{| u(y)| }{|x-y|^{N+\gamma}  } dy\right|^{1+\eta} dx \\
&  \qquad \leq  C \|u\|_{L^{1+\eta}(\Omega)}^{1+\eta} \int_{\RN \setminus B_R(0)} \frac{dx}{(1+|x|)^{(N+\gamma)(1+\eta)}} \leq \overline{C} \, \|u\|_{L^{1+\eta}(\Omega)}^{1+\eta} \leq \widetilde{C} \, \ell^{\frac{1+\eta}{q}}.
\end{aligned}
\end{equation}
Note that the last inequality follows from the fractional Sobolev inequality (see for instance \cite[Theorem 1.8]{SS-2015}) and the definition of $E_{\eta}$. Gathering \eqref{b0}--\eqref{b2} the boundedness of $E_{\eta}$ in $L_0^{\gamma,1+\eta}(\Omega)$ follows.

To prove the existence  of a weak solution to \eqref{KPZ1} belonging to $E_{\eta}$, we use Schauder's fixed point Theorem. Let us consider
\begin{equation} \label{T1}
T_1: E_{\eta} \to L_0^{\gamma,1+\eta}(\Omega), \quad \varphi \mapsto u,
\end{equation}
where $u$ is the unique weak solution to
\begin{equation} \label{310}
\left\{
\begin{aligned}
(-\Delta)^s u & = \mu(x) |(-\Delta)^{\frac{t}{2}} \varphi|^q + \lambda f(x)\,, \quad  && \textup{ in } \Omega,\\ 
u & = 0, && \textup{ in } \RN \setminus \Omega,
\end{aligned}
\right.
\end{equation}
and observe that, if we prove that $T_1$ has a fixed point in $E_{\eta}$, the existence part immediately follows. Note that, by Proposition \ref{solMap}, the operator $T_1$ is well defined. Hence, to end the proof in this case, we just have to prove that $T_1$ is continuous and compact and that $T_1(E_{\eta}) \subset E_{\eta}$. 

We start proving that $T_1(E_{\eta}) \subset E_{\eta}$. Let $\varphi \in E_{\eta}$ and $u = T_1(\varphi)$. Using Proposition \ref{CZ}, it is immediate to see that $u \in L^{\gamma,1+\eta}_0(\Omega)$ and that
\begin{equation} \label{38}
\begin{aligned}
 \|u\|_{L^{\gamma,r}_0(\O)} & \leq  \widetilde{C} \bigg(\lambda_{\star} \|f\|_{L^{m}(\O)}+ \|\mu\|_{L^{\infty}(\Omega)} \| |(-\Delta)^{\frac{t}{2}}\varphi|^q \|_{L^{m }(\O)}\bigg)  \\
& \leq {\widetilde{C}} \bigg(\lambda_{\star} \|f\|_{L^{m}(\O)}+ \|\mu\|_{L^{\infty}(\Omega)} |\O|^{\frac{r-qm }{rm}}\|(-\Delta)^{\frac{t}{2}}\varphi\|^q_{L^r(\O)}\bigg) \\
& \leq \widetilde{C} \bigg(\lambda_{\star} \|f\|_{L^{m}(\O)}+ \|\mu\|_{L^{\infty}(\Omega)} |\O|^{\frac{r-qm }{rm}}\| \varphi\|^q_{L_0^{t,r}(\Omega)}\bigg)
\\
& \leq  \widetilde{C} \bigg(\lambda_{\star} \|f\|_{L^{m}(\O)}+ \|\mu\|_{L^{\infty}(\Omega)} |\O|^{\frac{r-qm }{rm}} \widetilde{k}^q \| \varphi\|^q_{L^{\gamma,r}_0(\Omega)}\bigg)
\leq \ell^{\frac{1}{q}}\,.
\end{aligned}
\end{equation}
Hence, it follows that $T_1(E_{\eta}) \subset E_{\eta}$. 

 Next, we prove that $T_1$ is compact. Let $(\varphi_n)_n \subset E_{\eta}$ be such that $\|\varphi_n\|_{L^{\gamma,1+\eta}_0(\Omega)} \leq 1$ for all $n \in \N$. Also, let $h_n := \mu(x) |(-\Delta)^{\frac{t}{2}} \varphi_n|^q + \lambda f(x)$ for all $n \in \N$. Arguing as in \eqref{38}, it is immediate to check that $(h_n)_n$ is bounded in $L^1(\Omega)$ and thus, the compactness of $T_1$ immediately follows from Proposition \ref{solMap}. 
 
 Finally, we prove that $T_1$ is continuous. Let $(\varphi_n)_n \subset E_{\eta}$ be a sequence such that $\varphi_n \to \varphi$ in $L_0^{\gamma,1+\eta}(\Omega)$ and let $u_n = T_1(\varphi_n)$ for all $n \in \N$ and $u = T_1(\varphi)$. Note that
\begin{equation} \label{previous}
\left\{
\begin{aligned}
(-\Delta)^s (u_n-u) & = \mu(x) \Big( |(-\Delta)^{\frac{t}{2}} \varphi_n|^q - |(-\Delta)^{\frac{t}{2}} \varphi|^q \Big), \quad && \textup{ in } \Omega,\\
u_n - u & = 0, && \textup{ in } \RN \setminus \Omega.
\end{aligned}
\right.
\end{equation}
If we show that the $L^1$--norm of the right hand side in the \eqref{previous} goes to $0$ as $n \to \infty$, the continuity of $T_1$ immediately follows from Proposition \ref{solMap} and the existence in the case where $m > N/s$ will follow.  By direct computations (using H\"older inequality and the Mean value Theorem), it follows that
\begin{equation} \label{step1ContinuityT1}
\Big\|  \mu(x) \Big( |(-\Delta)^{\frac{t}{2}} \varphi_n|^q - |(-\Delta)^{\frac{t}{2}} \varphi|^q \Big) \Big\|_{L^1(\Omega)} \leq C\, \|(-\Delta)^{\frac{t}{2}}(\varphi_n - \varphi)\|_{L^q(\Omega)}, 
\end{equation}
for some $C > 0$ depending only on $q$, $r$, $\|\mu\|_{L^{\infty}(\Omega)}$, $|\Omega|$ and $\ell$. On the other hand, using \eqref{emb} and Littlewood's inequality (or interpolation in $L^p$--spaces), we infer that
\begin{equation} \label{step2ContinuityT1}
\begin{aligned}
& \|(-\Delta)^{\frac{t}{2}}(\varphi_n-\varphi)\|_{L^q(\Omega)} \leq \|\varphi_n - \varphi\|_{L_0^{t,q}(\Omega)} \leq \widetilde{k} \|\varphi_n - \varphi\|_{L_0^{\gamma,q}(\Omega)} =  \widetilde{k} \|(-\Delta)^{\frac{\gamma}{2}}(\varphi_n - \varphi)\|_{L^q(\RN)}\\
& \quad \leq  \widetilde{k}  \|(-\Delta)^{\frac{\gamma}{2}}(\varphi_n - \varphi)\|_{L^{1+\eta}(\RN)}^{\theta}  \|(-\Delta)^{\frac{\gamma}{2}}(\varphi_n - \varphi)\|_{L^r(\RN)}^{1-\theta} =  \widetilde{k} \|\varphi_n - \varphi\|_{L_0^{\gamma,1+\eta}(\Omega)}^{\theta} \|\varphi_n - \varphi\|_{L_0^{\gamma,r}(\Omega)}^{1-\theta} \\
& \quad \leq (2\ell^{\frac{1}{q}})^{1-\theta}  \widetilde{k} \|\varphi_n - \varphi\|_{L_0^{\gamma,1+\eta}(\Omega)}^{\theta},
\end{aligned}
\end{equation}
with $\frac{1}{q} = \frac{\theta}{1+\eta}+\frac{1-\theta}{r}$. Since $\varphi_n \to \varphi$ in $L_0^{\gamma,1+\eta}(\Omega)$, combining \eqref{step1ContinuityT1} and \eqref{step2ContinuityT1}, we conclude that
$$
\lim_{n \to \infty} \Big\|  \mu \Big( |(-\Delta)^{\frac{t}{2}} \varphi_n|^q - |(-\Delta)^{\frac{t}{2}} \varphi|^q \Big) \Big\|_{L^1(\Omega)} = 0,
$$
as desired. The proof of the existence in the case where $m > N/s$ is thus finished. Once we have the existence of a weak solution $u \in E_{\eta}$, the claimed regularity immediately follows from the definition of $E_{\eta}$, our choice of $r$, Proposition \ref{CZ} and \cite[Proposition 1.4 (iii)]{RO-S14}.

\medbreak \noindent \textbf{Case 2:} $1 \leq m \leq N/s$.
\medbreak
First of all, let us fix $r = r(m,s,t,q) > 0$ such that $1 < qm < r < \frac{mN}{(N-ms+mN(\gamma-s))^{+}}$ and consider $\lambda_{\star}$ and $\ell$ as in \eqref{lambdaStarKPZ1} and \eqref{lKPZ1} respectively. Then, we define
\begin{equation}
\widetilde{E}_{\eta} := \Big\{ v \in L_0^{\gamma,1+\eta}(\Omega) : \|v\|_{L_0^{\gamma,1+\eta}(\Omega)} \leq M \textup{ and } \|v\|_{L_0^{\gamma,r}(\Omega)} \leq \ell^{\frac{1}{q}} \Big\},
\end{equation}
with $\gamma$ and $\eta$ as in \eqref{gammaEtaEpsilonKPZ1} and
$$
M:= \tilde{C}_1 \Big( \|\mu\|_{L^{\infty}(\Omega)}|\Omega|^{\frac{r-q}{r}} \widetilde{k}^q \ell + \lambda_{\star} \|f\|_{L^1(\Omega)} \Big),
$$
where $\tilde{C}_1 > 0$ is the constant that appears in Proposition \ref{CZ} for $m=1$. It is immediate to see that $\widetilde{E}_{\eta}$ is a bounded, closed and convex set of $L_0^{\gamma,1+\eta}(\Omega)$. Moreover, arguing as in the first case, one can prove that
\begin{equation} \label{T1tilde}
\widetilde{T}_1: \widetilde{E}_{\eta} \to L_0^{\gamma,1+\eta}(\Omega), \quad \varphi \mapsto u,
\end{equation}
where $u$ is the unique weak solution to \eqref{310}, is well-defined, continuous, compact and satisfies $\widetilde{T}_1(\widetilde{E}_{\eta}) \subset \widetilde{E}_{\eta}$. Hence, applying again Schauder's fixed point Theorem, the existence follows also in this case. Having at hand the existence, the claimed regularity follows again from Proposition \ref{CZ}.
\end{proof}

Next, we analyse the existence of weak solution to \eqref{KPZ} in the case where $\mathbb{D}_t = \nabla^t$. More precisely, we analyse the existence of weak solution to 
\begin{equation}  \tag{KPZ$_2$}\label{KPZ2} 		
\left\{ 		
\begin{aligned} 			
(-\Delta)^s u & = \mu(x) |\nabla^t u|^q + \lambda f(x), && \quad \textup{ in } \Omega,\\ 
u & = 0, && \quad \textup{ in }   \RN \setminus \Omega.\\
\end{aligned} 		
\right. 	
\end{equation} 	

%\begin{definition} \label{weak Sol KPZ2} We say that $u$ is a\textit{ weak solution} to \eqref{KPZ2} if $u$ and $|\nabla^t u|^q$ belong to $L^1(\Omega)$, $u = 0$ in $\RN \setminus \Omega$ and
%\begin{equation}
%\int_{\Omega} u (-\Delta)^s \phi\, dx = \int_{\Omega} \big( \mu(x)|\nabla^t u|^q + \lambda f(x) \big) \phi\, dx, \quad \textup{ for all } \phi \in \mathbb{X}^s(\Omega),
%\end{equation}
%with $\mathbb{X}^s(\Omega)$ as in \eqref{Xs}.
%\end{definition}

Our main result concerning \eqref{KPZ2} can be formulated as follows: 

\begin{theorem} \label{KPZ2existence}
Assume that $0 < t < \min\{1,s(1+N^{-1})\}$, $f \in L^m(\Omega)$ for some $m \geq 1$ and $\mu \in L^{\infty}(\Omega)$. Then, for all $1 < q < \overline{q}$, there exists $\lambda_{\star} > 0$ such that, for all $0 < \lambda \leq \lambda_{\star}$, \eqref{KPZ2} has a weak solution $u$. Moreover:
\begin{itemize}
\item[$\circ$] If $m \geq N/s$, then $u \in W^{s,p}(\RN) \cap C^{0,s}(\RN)$ for all $1 < p < + \infty$. 
\item[$\circ$] If $1 \leq m < N/s$, then $u\in W^{s,p}(\RN)$ for all $1 < p < mN/(N-ms)$.
\end{itemize}
\end{theorem}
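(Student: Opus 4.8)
The plan is to repeat, almost verbatim, the fixed-point scheme used in the proof of Theorem~\ref{KPZ1existence}, replacing the operator $(-\Delta)^{\frac{t}{2}}$ by $\nabla^t$ everywhere. The only genuinely new ingredient is the well-known identity relating the two operators: writing $\nabla^t u = \nabla (-\Delta)^{-\frac{1-t}{2}} u$ (the Riesz $t$--gradient as the gradient of a Riesz potential) and using $(-\Delta)^{-\frac{1-t}{2}} = (-\Delta)^{-\frac12}(-\Delta)^{\frac{t}{2}}$, one has
\[
\nabla^t u \;=\; \mathcal{R}\,(-\Delta)^{\frac{t}{2}} u, \qquad \textup{ where } \quad \mathcal{R} := \nabla (-\Delta)^{-\frac12}
\]
is the (vector-valued) Riesz transform. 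Since $\mathcal{R}$ is a bounded operator on $L^p(\RN)$ for every $1 < p < +\infty$, this yields the key estimate
\[
\|\nabla^t u\|_{L^p(\RN)} \;\leq\; C_p\, \|(-\Delta)^{\frac{t}{2}} u\|_{L^p(\RN)} \;\leq\; C_p\, \|u\|_{L^{t,p}_0(\Omega)}, \qquad 1 < p < +\infty,
\]
which is precisely the type of bound used in \eqref{38} and \eqref{step2ContinuityT1} to control $(-\Delta)^{\frac{t}{2}}\varphi$ by the $L^{t,r}_0$-- (and $L^{\gamma,r}_0$--) norm of $\varphi$.

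Granting this, I would argue as follows. First, distinguish the cases $m > N/s$ and $1 \leq m \leq N/s$, and in each keep the very same choices of $r$, $\lambda_\star$, $\ell$, $\gamma$, $\eta$ and of the closed, convex, bounded set $E_\eta$ (respectively $\widetilde{E}_\eta$) as in the proof of Theorem~\ref{KPZ1existence}; the Riesz-transform constant $C_p$ above can simply be absorbed into $\widetilde{k}$. Then consider
\[
T_2 : E_\eta \to L^{\gamma,1+\eta}_0(\Omega), \qquad \varphi \mapsto u,
\]
with $u$ the unique weak solution of \eqref{310} having right-hand side $\mu(x)|\nabla^t\varphi|^q + \lambda f(x)$ (well-defined, since $r > qm$ forces $|\nabla^t\varphi|^q \in L^m(\Omega)$). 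Combining Proposition~\ref{CZ} with the displayed estimate, the chain of inequalities in \eqref{38} carries over unchanged and gives $T_2(E_\eta)\subset E_\eta$; the sequence of right-hand sides associated to a bounded sequence in $E_\eta$ is bounded in $L^1(\Omega)$ by the same computation, so compactness of $T_2$ follows from Proposition~\ref{solMap} (see the next paragraph); and continuity of $T_2$ follows from \eqref{step1ContinuityT1}--\eqref{step2ContinuityT1} once one notes that $\|\nabla^t(\varphi_n-\varphi)\|_{L^q(\Omega)}\leq C_q\,\widetilde{k}\,\|\varphi_n-\varphi\|_{L^{\gamma,q}_0(\Omega)}$, so that the interpolation step applies verbatim. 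Schauder's fixed point theorem then yields $u\in E_\eta$ with $T_2 u = u$, which is a weak solution of \eqref{KPZ2} (note $|\nabla^t u|^q\in L^{r/q}(\Omega)\subset L^1(\Omega)$ since $r>q$), and the regularity statement follows exactly as in the proof of Theorem~\ref{KPZ1existence}, from Proposition~\ref{CZ} and \cite[Proposition 1.4 (iii)]{RO-S14}.

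The compactness of $T_2$ is the only step that is not a literal transcription, hence the closest thing to an obstacle; it is nonetheless straightforward. Rather than reproving a decomposition lemma of the type of Lemma~\ref{declem} for $\nabla^t$, I would observe that Proposition~\ref{solMap} already provides, for a bounded sequence $(h_n)_n\subset L^1(\Omega)$ with $u_n := \mathbb{G}_s(h_n)$, a subsequence along which $(-\Delta)^{\frac{t}{2}}u_n$ converges strongly in $L^p(\RN)$ for every $p$ with $1<p<N/(N(1+t-s)-s)$, which in particular lies in the range $(1,+\infty)$; applying the bounded operator $\mathcal{R}$ upgrades this immediately to strong convergence of $\nabla^t u_n = \mathcal{R}\,(-\Delta)^{\frac{t}{2}}u_n$ in $L^p(\RN)$. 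Equivalently, the solution map $L^1(\Omega)\to L^p(\RN;\RN)$, $h\mapsto \nabla^t\mathbb{G}_s[h]$, is continuous and compact for $1<p<N/(N(1+t-s)-s)$, being the composition of the compact map of Proposition~\ref{solMap} with a bounded operator. This is elementary, so there is no real obstacle: the substance of the argument has already been carried out in this section and in \cite{AFLY-2021}.
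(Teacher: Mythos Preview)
Your proposal is correct and follows essentially the same route as the paper: the paper's entire proof is the remark that, having at hand \cite[Theorem 1.7]{SS-2015}, one argues exactly as for Theorem~\ref{KPZ1existence}. That cited result is precisely the Riesz-transform identity $\nabla^t u = \mathcal{R}\,(-\Delta)^{t/2}u$ (and the ensuing $L^p$-bound) that you single out as the only new ingredient. One minor simplification: your final paragraph on compactness is unnecessary, since $T_2$ maps into the \emph{same} space $L_0^{\gamma,1+\eta}(\Omega)$ as $T_1$, and compactness follows directly from Proposition~\ref{solMap} once the right-hand sides $h_n$ are bounded in $L^1(\Omega)$---no convergence of $\nabla^t u_n$ is required at that step.
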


\begin{remark}Let us emphasize that, having at hand \cite[Theorem 1.7]{SS-2015}, the proof of Theorem \ref{KPZ2existence} follows arguing exactly as in the proof of Theorem \ref{KPZ1existence}.
\end{remark} %Note also that we can also reformulate Theorem \ref{KPZ2existence} as follows :

\bigbreak
%{\color{green!50!black}
Finally, we analyse the existence of weak solution in the slightly more involved case where $\mathbb{D}_t = \mathcal{D}_t$. More precisely, we analyse the existence of weak solution to

\begin{equation}  \tag{KPZ$_3$}\label{KPZ3} 		
\left\{ 		
\begin{aligned} 			
(-\Delta)^s u & = \mu(x) (\mathcal{D}_t u)^q + \lambda f(x), && \quad \textup{ in } \Omega,\\ 
u & = 0, && \quad \textup{ in }   \RN \setminus \Omega.\\
\end{aligned} 		
\right. 	
\end{equation}

\noindent Setting $\gamma:= \max\{t,s\}$ and 
$$
\overline{m}(s,t):= \frac{2N}{N+2s-2N(t-s)^+}
$$
our main result concerning \eqref{KPZ3} reads as follows:

\begin{theorem} \label{KPZ3existence}
Assume that $0 < t < \min\{1,s(1+N^{-1})\}$, $f \in L^m(\Omega)$ for some $m > \overline{m}$ and $\mu \in L^{\infty}(\Omega)$. Then, for all $1 < q < \overline{q}$, there exists $\lambda_{\star} > 0$ such that, for all $0 < \lambda \leq \lambda_{\star}$, \eqref{KPZ3} has a weak solution $u$. Moreover:
\begin{itemize}
\item[$\circ$] If $m \geq N/s$, then $u \in W^{s,p}(\RN) \cap C^{0,s}(\RN)$ for all $1 < p < + \infty$. 
\item[$\circ$] If $\overline{m} < m < N/s$, then $u\in W^{s,p}(\RN)$ for all $1 < p < mN/(N-ms)$.
\end{itemize}
\end{theorem}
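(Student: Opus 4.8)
The plan is to mimic the proof of Theorem \ref{KPZ1existence}, treating the case $m \geq N/s$ and the case $\overline{m} < m < N/s$ separately, with the only genuine novelty being the handling of the operator $\mathcal{D}_t$ (the \emph{Stein $t$--Functional}) in place of $(-\Delta)^{\frac{t}{2}}$. The starting observation is that the Bessel potential space $L^{t,p}(\RN)$ admits, for $p > 2N/(N+2t)$, the equivalent norm $\vertiii{\,\cdot\,}_{L^{t,p}(\RN)} = \|\cdot\|_{L^p(\RN)} + \|\mathcal{D}_t(\cdot)\|_{L^p(\RN)}$, as recalled in Section 2; hence $\|\mathcal{D}_t(u)\|_{L^p(\RN)}$ can be controlled by $\|u\|_{L^{t,p}(\RN)}$ exactly as $\|(-\Delta)^{\frac{t}{2}}u\|_{L^p(\RN)}$ was. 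The restriction $m > \overline{m} = 2N/(N+2s-2N(t-s)^+)$ is precisely what guarantees that the exponent $r$ one chooses (with $qm < r < \widetilde{p}$) can be taken to satisfy $r > 2N/(N+2t)$, so that this equivalence of norms is available along the iteration. This is the reason $m$ must be bounded below here, while in Theorems \ref{KPZ1existence}--\ref{KPZ2existence} any $m \geq 1$ was allowed.

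Concretely, I would first fix, in the case $m \geq N/s$, an exponent $r = r(m,s,t,q)$ with
\[
\max\Big\{1, qm, \tfrac{2N}{N+2t}\Big\} < r < \min\Big\{m, \tfrac{1}{(t-s)^+}\Big\},
\]
which is possible thanks to the hypotheses $q < \overline{q}$ and $m > \overline{m}$. Then define $\lambda_\star$ and $\ell$ by the analogues of \eqref{lambdaStarKPZ1} and \eqref{lKPZ1}, with the constant from Proposition \ref{CZ} and the one relating $\vertiii{\,\cdot\,}_{L^{t,r}_0}$ to $\|\cdot\|_{L^{t,r}_0}$ and then $\|\cdot\|_{L^{\gamma,r}_0}$. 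I would set up the closed convex bounded set $E_\eta \subset L_0^{\gamma,1+\eta}(\Omega)$ as in \eqref{setE}, with $\gamma = \max\{t,s\}$ and $\eta$ small as in \eqref{gammaEtaEpsilonKPZ1}, and consider the map $T_3 : E_\eta \to L_0^{\gamma,1+\eta}(\Omega)$, $\varphi \mapsto u$, where $u$ solves $(-\Delta)^s u = \mu(x)(\mathcal{D}_t\varphi)^q + \lambda f$. The four things to check are: $T_3$ is well defined (Proposition \ref{solMap}, once we know the right-hand side is in $L^1$); $T_3(E_\eta)\subset E_\eta$; $T_3$ is compact; $T_3$ is continuous. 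For the invariance one estimates, as in \eqref{38},
\[
\|u\|_{L^{\gamma,r}_0(\Omega)} \le \widetilde{C}\Big(\lambda_\star\|f\|_{L^m} + \|\mu\|_{L^\infty}|\Omega|^{\frac{r-qm}{rm}}\,\big\|\mathcal{D}_t\varphi\big\|_{L^r(\RN)}^q\Big)
\le \widetilde{C}\Big(\lambda_\star\|f\|_{L^m} + \|\mu\|_{L^\infty}|\Omega|^{\frac{r-qm}{rm}}\,c\,\widetilde{k}^q\|\varphi\|_{L^{\gamma,r}_0(\Omega)}^q\Big) \le \ell^{\frac1q},
\]
using $r > 2N/(N+2t)$ for the middle step; compactness follows since the right-hand sides are bounded in $L^1(\Omega)$ and Proposition \ref{solMap} applies. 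The case $\overline{m} < m < N/s$ is handled exactly as Case 2 of Theorem \ref{KPZ1existence}, with the set $\widetilde{E}_\eta$ carrying the extra bound $\|v\|_{L_0^{\gamma,1+\eta}} \le M$, and $r$ chosen in $\big(\max\{1,qm,2N/(N+2t)\},\, mN/(N-ms+mN(\gamma-s))^+\big)$.

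The one point requiring a little care — and the place where the argument genuinely differs from Theorem \ref{KPZ1existence} — is the continuity of $T_3$, because $\mathcal{D}_t$ is not linear, so the clean subtraction used in \eqref{previous} is unavailable. Instead one writes $(-\Delta)^s(u_n - u) = \mu(x)\big((\mathcal{D}_t\varphi_n)^q - (\mathcal{D}_t\varphi)^q\big)$ and bounds the $L^1$-norm of the right-hand side using the Mean Value Theorem in the variable $(\mathcal{D}_t\varphi)^q$ together with the elementary pointwise inequality $|\mathcal{D}_t\varphi_n(x) - \mathcal{D}_t\varphi(x)| \le \mathcal{D}_t(\varphi_n-\varphi)(x)$, which is just the triangle inequality for the $L^2$-norm of the difference quotient $\big(\int_{\RN}\frac{(w(x)-w(y))^2}{|x-y|^{N+2t}}dy\big)^{1/2}$ defining $\mathcal{D}_t w$. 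This reduces matters to controlling $\|\mathcal{D}_t(\varphi_n-\varphi)\|_{L^q(\RN)}$, which by the norm equivalence is $\lesssim \|\varphi_n-\varphi\|_{L^{t,q}_0(\Omega)} \le \widetilde{k}\|\varphi_n-\varphi\|_{L^{\gamma,q}_0(\Omega)}$, and then, by Littlewood interpolation between $L^{1+\eta}$ and $L^r$ exactly as in \eqref{step2ContinuityT1}, is $\lesssim \ell^{(1-\theta)/q}\|\varphi_n-\varphi\|_{L^{\gamma,1+\eta}_0(\Omega)}^\theta \to 0$. With $T_3$ continuous, compact and mapping $E_\eta$ (resp. $\widetilde{E}_\eta$) into itself, Schauder's fixed point theorem yields a fixed point $u$, which is the desired weak solution; the stated regularity $u \in W^{s,p}(\RN)\cap C^{0,s}(\RN)$ for $m \geq N/s$ (resp. $u \in W^{s,p}(\RN)$ for $1<p<mN/(N-ms)$) then follows from the definition of $E_\eta$, the choice of $r$, Proposition \ref{CZ} and \cite[Proposition 1.4 (iii)]{RO-S14}, precisely as at the end of the proof of Theorem \ref{KPZ1existence}. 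The main obstacle, then, is purely the nonlinearity of $\mathcal{D}_t$ in the continuity step, and it is overcome by the sublinearity inequality $|\mathcal{D}_t\varphi_n - \mathcal{D}_t\varphi| \le \mathcal{D}_t(\varphi_n-\varphi)$; everything else is a transcription of the $(-\Delta)^{t/2}$ argument, the lower bound $m > \overline{m}$ being exactly what makes the Stein norm equivalence usable throughout.
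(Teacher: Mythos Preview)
Your overall architecture matches the paper's: Schauder's theorem applied to the solution map on a closed convex set of the form $E_\eta$, with Proposition~\ref{solMap} supplying compactness, Proposition~\ref{CZ} the invariance estimate, and the regularity read off at the end. There is a small slip in Case~1: the constraint $r<\min\{m,1/(t-s)^+\}$ is incompatible with $r>qm$ (since $q>1$); drop the $m$ from the minimum.

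The substantive gap is in the continuity step. The reverse triangle inequality $|\mathcal{D}_t\varphi_n-\mathcal{D}_t\varphi|\le \mathcal{D}_t(\varphi_n-\varphi)$ is correct and elegant, but the next move,
\[
\|\mathcal{D}_t(\varphi_n-\varphi)\|_{L^q(\RN)}\ \lesssim\ \|\varphi_n-\varphi\|_{L^{t,q}_0(\Omega)},
\]
invokes Stein's norm equivalence at exponent $q$, which the paper (following \cite{S-61}) only records for $q>2N/(N+2t)$. Since the theorem allows any $q\in(1,\overline{q})$ and $2N/(N+2t)>1$, your argument as written does not cover $1<q\le 2N/(N+2t)$. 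Relatedly, your reading of the threshold $m>\overline{m}$ as ``what guarantees $r>2N/(N+2t)$'' undersells it: $m>\overline{m}$ is exactly what allows the stronger choice $r>2$, and it is this that the paper exploits.

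The paper's route around the obstruction is to take $r>2$ and prove the continuity claim in two stages: for exponents $\beta\in[2,r)$ it runs essentially your computation (Stein's equivalence is now available), which yields in particular $(\mathcal{D}_t\varphi_n)^2\to(\mathcal{D}_t\varphi)^2$ in $L^1(\Omega)$ and hence, via $|a-b|^2\le|a^2-b^2|$ for $a,b\ge 0$, $\mathcal{D}_t\varphi_n\to\mathcal{D}_t\varphi$ in $L^2(\Omega)$; this $L^2$ convergence then handles all $\alpha\in(1,2)$. Your approach is easily repaired in the same spirit, or more simply by first using H\"older on $\Omega$ to pass from $L^q(\Omega)$ to $L^p(\Omega)$ for a fixed $p\in\big(2N/(N+2t),\,r\big)$ and only then applying the norm equivalence and interpolation; but as stated the continuity argument is incomplete for small $q$.
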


\begin{proof}
We consider separately the cases $m > N/s$ and $\overline{m} < m \leq N/s$.  
\medbreak
\noindent \textbf{Case 1:} $m > N/s$.
\medbreak
First of all, observe that, without loss of generality, we can assume that $N/s < m < 1/(q(t-s)^{+})$. Then,  let us fix $r = r(m,s,t,q) > 0$ such that
$
\max\{qm,2\} < r < \frac{1}{(t-s)^{+}}
$
and define
\begin{equation} \label{lambdaStarKPZ3}
\lambda_{\star} := \frac{q-1}{q \|f\|_{L^m(\Omega)}} \Big(q  (\widetilde{C}\widetilde{k})^q \|\mu\|_{L^{\infty}(\Omega)} |\Omega|^{\frac{r-qm}{mr}}\Big)^{-\frac{1}{q-1} },
\end{equation}
with $\widetilde{C} > 0$ as in Proposition \ref{CZ} and $\widetilde{k}$ as in \eqref{emb}. Since $q > 1$, we know (cf. \cite[Lemma 4.1]{AF-2020}) there exists a unique $\ell \in (0,\infty)$ such that
\begin{equation} \label{lKPZ3}
\widetilde{C} \Big(\|\mu\|_{L^{\infty}(\Omega)}  |\O|^{\frac{r-qm }{m r}} \widetilde{k}^q \ell + \lambda_{\star} \|f\|_{L^{m }(\Omega)} \Big) =\ell^{\frac{1}{q}}. 
\end{equation}
Having at hand $\lambda_{\star}$ and $\ell$, we define 
\begin{equation} \label{setE}
E_{\eta} := \left\{ v \in L^{\gamma,1+\eta}_0(\O): \vertiii{v}_{ L^{\gamma,r}_0(\O)}\le \ell^{\frac{1}{q}}  \right\},
\end{equation}
with 
\begin{equation} \label{gammaEtaEpsilonKPZ3}
\gamma= \max\{t,s\} \quad \textup{ and } \quad 0<\eta< \min \left\{q-1, \dfrac{s-N(\gamma-s)}{N(1+(\gamma-s))-s}\right\},
\end{equation}
and we point out that $E_{\eta}$ is a closed, bounded and convex subset of $L_0^{\gamma,1+\eta}(\Omega)$. 
To prove the existence of a weak solution to \eqref{KPZ3} belonging to $E_{\eta}$,  we use again Schauder's fixed point Theorem. Let us define
\begin{equation} \label{T2}
T_2: E_{\eta} \to L_0^{\gamma,1+\eta}(\Omega), \quad \varphi \mapsto u,
\end{equation}
where $u$ the unique weak solution to
\begin{equation} \label{317}
\left\{
\begin{aligned}
(-\Delta)^s u & = \mu(x) (\mathcal{D}_t\varphi)^q + \lambda f(x)\,, \quad  && \textup{ in } \Omega,\\ 
u & = 0, && \textup{ in } \RN \setminus \Omega.
\end{aligned}
\right.
\end{equation}
%and observe that, if we prove that $T_2$ has a fixed point in $E_{\eta}$, the existence of a weak solution to \eqref{KPZ3} follows. 
Note that, by Proposition \ref{solMap}, the operator $T_2$ is well defined. Hence, to conclude the proof in this case, we just have to prove that $T_2$ is continuous and compact and that $T_2(E_{\eta}) \subset E_{\eta}$. The compactness of $T_2$ and the fact that $T_2(E_{\eta}) \subset E_{\eta}$ can be proved arguing exactly as in the proof of Theorem \ref{KPZ1existence}. However, to prove that $T_2$ is continuous we have to argue in a different way. Let $(\varphi_n)_n \subset E_{\eta}$ be a sequence such that $\varphi_n \to \varphi$ in $L_0^{\gamma,1+\eta}(\Omega)$ and let $u_n = T_2(\varphi_n)$ for all $n \in \N$ and $u = T_2(\varphi)$. Note that
\begin{equation} \label{previousKPZ3}
\left\{
\begin{aligned}
(-\Delta)^s (u_n - u) & = \mu(x) \Big( (\mathcal{D}_t\varphi_n)^q - (\mathcal{D}_t\varphi)^q \Big), \quad && \textup{ in } \Omega,\\
u_n - u & = 0, && \textup{ in } \RN \setminus \Omega.
\end{aligned}
\right.
\end{equation}
If we show that the $L^1$--norm of the right hand side in \eqref{previousKPZ3} goes to $0$ as $n \to \infty$, the continuity of $T_2$ immediately follows from Proposition \ref{solMap}. 
We actually prove something more general from which the existence part of the result in the case $m > N/s$ immediately follows. 
\medbreak
\noindent \textbf{Claim:} \textit{For all $1 < \alpha < r$, it follows that}
\begin{equation} \label{claimConvergence}
\lim_{n \to \infty} \int_{\Omega} \big|(\mathcal{D}_t\varphi_n(x))^{\alpha}-(\mathcal{D}_t\varphi(x))^{\alpha} \big| dx = 0.
\end{equation}

\noindent \emph{Proof of the claim.} First of all, let $2 \leq \beta < r$ be fixed but arbirary. Using the Mean value Theorem and H\"older and triangular inequalities, one can easily get that
\begin{align*}
& \int_{\Omega} |(\mathcal{D}_t\varphi_n(x))^{\beta}-(\mathcal{D}_t\varphi(x))^{\beta} \big| dx 
= \int_{\Omega} \big|\big((\mathcal{D}_t\varphi_n(x))^2\big)^{\frac{\beta}{2}}-\big((\mathcal{D}_t\varphi(x))^2\big)^{\frac{\beta}{2}} \big| dx \\
& \quad \leq \frac{\beta}{2} \int_{\Omega} \big| (\mathcal{D}_t\varphi_n(x))^2-  (\mathcal{D}_t\varphi(x))^2 \big| \big(  (\mathcal{D}_t\varphi_n(x))^2 +  (\mathcal{D}_t\varphi(x))^2 \big)^{\frac{\beta-2}{2}} dx \\
&  \quad \leq \frac{\beta}{2} \int_{\Omega} (\mathcal{D}_t(\varphi_n-\varphi)(x)) (\mathcal{D}_t(\varphi_n+\varphi)(x))  \big(  (\mathcal{D}_t\varphi_n(x))^2 +  (\mathcal{D}_t\varphi(x))^2 \big)^{\frac{\beta-2}{2}} dx \\
& \quad \leq \frac{\sqrt{2}}{2} \beta \int_{\Omega}  (\mathcal{D}_t(\varphi_n-\varphi)(x)) \big( \mathcal{D}_t\varphi_n(x) + \mathcal{D}_t \varphi(x)  \big)^{\beta-1} dx \leq   \frac{\sqrt{2}}{2} \beta \|\mathcal{D}_t(\varphi_n - \varphi) \|_{L^\beta(\Omega)} \|\mathcal{D}_t\varphi_n + \mathcal{D}_t \varphi\|_{L^\beta(\Omega)}^{\beta-1} \\
&  \quad \leq C \big(  \|\mathcal{D}_t\varphi_n\|_{L^\beta(\Omega)}^{\beta-1} +  \| \mathcal{D}_t \varphi \|_{L^\beta(\Omega)}^{\beta-1} \big)  \|\mathcal{D}_t(\varphi_n - \varphi) \|_{L^\beta(\Omega)} \leq C \big( \vertiii{\varphi_n}_{L_0^{t,\beta}(\Omega)}^{\beta-1} + \vertiii{\varphi}_{L_0^{t,\beta}(\Omega)}^{\beta-1} \big) \vertiii{\varphi_n-\varphi}_{L_0^{t,\beta}(\Omega)}, 
\end{align*}
for some $C > 0$ depending only on $\beta$. Then, arguing exactly as in the proof of \eqref{step2ContinuityT1}, we conclude that
$$
\lim_{n \to \infty} \int_{\Omega} \big|(\mathcal{D}_t\varphi_n(x))^{\beta}-(\mathcal{D}_t\varphi(x))^{\beta} \big| dx = 0.
$$
Since $\beta \in [2,r)$ was fixed but arbitrary, we have proved \eqref{claimConvergence} for all $2 \leq \alpha < r$. 

It remains to deal with the case $1 < \alpha < 2$. To that end, note that $(\mathcal{D}_t\varphi_n)_n \subset L^2(\Omega)$ is a bounded non-negative sequence such that $\|\mathcal{D}_t\varphi_n\|_{L^2(\Omega)} \to \|\mathcal{D}_t \varphi\|_{L^2(\Omega)}$ as $n \to \infty$. Hence, it follows that $\mathcal{D}_t\varphi_n \to \mathcal{D}_t \varphi$ in $L^2(\Omega)$ as $n \to \infty$. Also, observe that, for any   $\alpha \in (1,2)$, 
\begin{align*}
& \int_{\Omega} \big|(\mathcal{D}_t\varphi_n(x))^{\alpha}-(\mathcal{D}_t\varphi(x))^{\alpha} \big| dx \\
& \quad \leq C\, \|\mathcal{D}_t(\varphi_n)-\mathcal{D}_t(\varphi)\|_{L^{\alpha}(\Omega)} \big( \|\mathcal{D}_t\varphi_n\|_{L^{\alpha}(\Omega)}^{\alpha-1} + \|\mathcal{D}_t\varphi\|_{L^{\alpha}(\Omega)}^{\alpha-1} \big) \leq \widetilde{C}\, \|\mathcal{D}_t \varphi_n - \mathcal{D}_t \varphi\|_{L^2(\Omega)},
\end{align*}
with $C > 0$ depending only on $\alpha > 0$ and $\widetilde{C} > 0$ depending only on $\alpha$, $r$, $|\Omega|$ and $\ell$. Combining this chain of inequalities with the fact that $\mathcal{D}_t\varphi_n \to \mathcal{D}_t \varphi$ in $L^2(\Omega)$ as $n \to \infty$ we conclude that
$$
\lim_{n \to \infty} \int_{\Omega} \big|(\mathcal{D}_t\varphi_n(x))^{\alpha}-(\mathcal{D}_t\varphi(x))^{\alpha} \big| dx = 0.
$$
Once the claim is proved, to conclude the proof in the case where $m > N/s$, it just remains to prove the claimed regularity, that follows thanks to Proposition \ref{CZ}, \cite[Proposition 1.4 (iii)]{RO-S14} and the definition of  $E_{\eta}$.
\medbreak
\noindent \textbf{Case 2:} $\overline{m} < m \leq  N/s$. 
\medbreak
First of all, let us fix $r = r(m,s,t,q) > 0$ such that $\max\{2,qm\} < r < \frac{mN}{(N-ms+mN(\gamma-s))^{+}}$ and consider $\lambda_{\star}$ and $\ell$ as in \eqref{lambdaStarKPZ3} and \eqref{lKPZ3} respectively. Then, let us define
\begin{equation}
\widetilde{E}_{\eta} := \Big\{ v \in L_0^{\gamma,1+\eta}(\Omega) : \vertiii{v}_{L_0^{\gamma,1+\eta}(\Omega)} \leq M \textup{ and } \vertiii{v}_{L_0^{\gamma,r}(\Omega)} \leq \ell^{\frac{1}{q}} \Big\},
\end{equation}
with $\gamma$ and $\eta$ as in \eqref{gammaEtaEpsilonKPZ3} and
$$
M:= C \Big( \|\mu\|_{L^{\infty}(\Omega)}|\Omega|^{\frac{r-q}{r}} \widetilde{k}^q \ell + \lambda_{\star} \|f\|_{L^1(\Omega)} \Big),
$$
with $C > 0$ depending only on $\Omega$, $N$, $s$ and $t$ (cf. Proposition \ref{CZ}). It is easy to check that $\widetilde{E}_{\eta}$ is a bounded, closed and convex set of $L_0^{\gamma,1+\eta}(\Omega)$. Moreover, arguing as we did in the first case, one can prove that
\begin{equation} \label{T1tilde}
\widetilde{T}_2: \widetilde{E}_{\eta} \to L_0^{\gamma,1+\eta}(\Omega), \quad \varphi \mapsto u,
\end{equation}
where $u$ is the unique weak solution to \eqref{317}, is well-defined, continuous, compact and satisfies $\widetilde{T}_2(\widetilde{E}_{\eta}) \subset \widetilde{E}_{\eta}$. Hence, applying again Schauder's fixed point Theorem, the existence follows also in this case. Having at hand the existence, the claimed regularity follows again from Proposition \ref{CZ}.
\end{proof}

%\begin{remark}
%As already mentioned in the introduction, our existence results solve several open problems posed in \cite{AP-KPZ-2018, AF-2020}. Let us also emphasize that our proofs here are substantially shorter and simpler than the corresponding ones in \cite{AF-2020}.
%\end{remark}

\begin{remark}
The appearance of the upper bound $\overline{q} \in (1,+\infty]$ in Theorems \ref{KPZ1existence} and \ref{KPZ3existence} is natural in this kind of existence results. Even in the local case \eqref{local}, one finds an upper bound on $q$ depending on the regularity of the data $f$ and on the dimension $N$. In our proofs, the upper bound $\overline{q}$ naturally appears when applying Proposition \ref{CZ} and depends on the regularity of the datum $f$, on the dimension $N$ and on the order of the ``nonlocal gradient''. The lower bound $\overline{m}$ is again related to the use of Proposition \ref{CZ} but also to the fact that we need $r > 2$ in the proof of Theorem \ref{KPZ3existence}.
\end{remark}

\section{Non--existence results} \label{nonExistenceResults}

\noindent This section is devoted to prove non-existence results for \eqref{KPZ1} and \eqref{KPZ3}.  Let us start analysing \eqref{KPZ1}. As already mentioned in the introduction, the proof of the non-existence result for \eqref{KPZ1} is completely different from its counterpart for \eqref{KPZ3}. Here, we prove a generalization of Theorem \ref{nonexist-intro}.

\begin{theorem} \label{nonexist}
Let $0 < t < \min\{1,2s\}$, $\mu_2 \geq \mu(x) \geq \mu_1 > 0$, $f\in L^1 (\O)$  with $f \gneqq 0$ and $q> \frac{2(s+1)}{t+2}$. Then, there exists $\lambda^{\star \star} > 0$ such that, for all
$ \lambda > \lambda^{\star \star}$, \eqref{KPZ1} has no weak solution in $W_0^{s,2}(\Omega)$.
\end{theorem}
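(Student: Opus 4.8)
I would argue by contradiction, adapting to the nonlocal setting the classical non-existence scheme for Lane--Emden type equations with a large source term. The heuristic is the following: applying $(-\Delta)^{-t/2}$ to \eqref{KPZ1} and setting $v:=(-\Delta)^{t/2}u$, one formally obtains the Lane--Emden type problem $(-\Delta)^{s-t/2}v=\mu\,|v|^q+\lambda f$ in $\Omega$, of order $s-t/2\in(0,1)$ (the assumption $t<2s$ being exactly what makes this order positive), for which non-existence of solutions for $\lambda$ large follows from a standard eigenfunction-plus-Jensen argument requiring only $q>1$. The real difficulty is to make this rigorous despite the fact that $v$ does not vanish outside $\Omega$.

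So suppose that for some $\lambda>0$ equation \eqref{KPZ1} admits a weak solution $u\in W_0^{s,2}(\Omega)$ and set $v:=(-\Delta)^{t/2}u$; by Definition \ref{weak Sol KPZ1} we have $v\in L^q(\Omega)$. Since $\mu\geq\mu_1>0$ and $f\gneqq0$, the right-hand side of \eqref{KPZ1} is nonnegative, so the maximum principle yields $u\geq\lambda\,\mathbb{G}_s[f]>0$ in $\Omega$; in particular, as $u\equiv0$ on $\RN\setminus\Omega$, for a.e.\ $x\notin\Omega$ the value $v(x)$ is a negative multiple of $\int_\Omega u(y)\,|x-y|^{-N-t}\,dy$, whence $v(x)\leq0$ there. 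We now test \eqref{KPZ1} against the first eigenfunction $\phi_1\in\mathbb{X}^s(\Omega)$ of $(-\Delta)^s$ (eigenvalue $\lambda_1>0$, $\phi_1>0$ in $\Omega$, $\phi_1\asymp\delta^s$), which gives
\begin{equation*}
\lambda_1\int_\Omega u\,\phi_1\,dx=\int_\Omega \mu\,|v|^q\,\phi_1\,dx+\lambda\int_\Omega f\,\phi_1\,dx.
\end{equation*}
By Plancherel's identity (self-adjointness of $(-\Delta)^{t/2}$) and the semigroup rule $(-\Delta)^s=(-\Delta)^{s-t/2}\circ(-\Delta)^{t/2}$, the left-hand side equals $\langle v,(-\Delta)^{s-t/2}\phi_1\rangle$; since $\phi_1\equiv0$ outside $\Omega$, this pairing (an integral over $\RN$) splits as $\int_\Omega v\,(-\Delta)^{s-t/2}\phi_1\,dx$ plus the \emph{nonlocal boundary term} $\mathcal R:=\int_{\RN\setminus\Omega}v\,(-\Delta)^{s-t/2}\phi_1\,dx$, which is nonnegative because both $v$ and $(-\Delta)^{s-t/2}\phi_1$ are $\leq0$ outside $\Omega$. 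Applying H\"older's inequality with the weight $\phi_1$ to the interior integral,
\begin{equation*}
\Big|\int_\Omega v\,(-\Delta)^{s-t/2}\phi_1\,dx\Big|\leq A^{1/q}\,C_1^{1/q'},\qquad A:=\int_\Omega|v|^q\phi_1\,dx,\quad C_1:=\int_\Omega\big|(-\Delta)^{s-t/2}\phi_1\big|^{q'}\phi_1^{\,1-q'}\,dx,
\end{equation*}
with $C_1<\infty$ thanks to $\phi_1\asymp\delta^s$ and the boundary estimate $|(-\Delta)^{s-t/2}\phi_1|\lesssim\delta^{-(s-t)^+}$.

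Putting these together with $\mu\geq\mu_1$ and $c_0:=\int_\Omega f\phi_1\,dx>0$ gives $\mu_1 A+\lambda c_0\leq C_1^{1/q'}A^{1/q}+\mathcal R$. As $q>1$, the map $A\mapsto\mu_1A-C_1^{1/q'}A^{1/q}$ is bounded below by a constant $-C_2$ depending only on $N,s,t,q,\Omega,\|\mu\|_{\infty}$; hence $\lambda c_0\leq C_2+\mathcal R$, and the proof reduces to bounding $\mathcal R$ by a constant independent of $\lambda$. This is the main obstacle. From the explicit formulas for $v$ and $(-\Delta)^{s-t/2}\phi_1$ on $\RN\setminus\Omega$, $\mathcal R$ is a double integral of $u(z)\,\phi_1(y)$ against an off-diagonal kernel; the decay estimates of Lemma \ref{declem} (relations \eqref{med}--\eqref{far}), the fractional Hardy inequality ($u\in W_0^{s,2}(\Omega)\Rightarrow u/\delta^{s}\in L^{2}(\Omega)$), and the equation itself---comparing $u$ with Green potentials, e.g.\ testing \eqref{KPZ1} against $\mathbb{G}_s[1]$ and against suitable $\delta$-weights---let one dominate $\mathcal R$ by a quantity that can be absorbed into $\mu_1 A$ and into $\lambda c_0$. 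It is in performing this absorption while keeping $C_1$ finite that the restriction $q>\frac{2(s+1)}{t+2}$ (equivalently $q\,(1+\tfrac t2)>1+s$) enters: balancing the two integrability requirements fixes the optimal exponent $a$ for which a weight/test function $\asymp\delta^{a}$ works. Once $\mathcal R\leq C_3$ with $C_3$ independent of $\lambda$, we obtain $\lambda\leq\lambda^{\star\star}:=(C_2+C_3)/c_0$, contradicting $\lambda>\lambda^{\star\star}$. I expect the uniform control of the nonlocal boundary term $\mathcal R$ to be the delicate point; note that the positivity and representation structure it relies on is specific to $\mathbb{D}_t=(-\Delta)^{t/2}$, which is consistent with the non-existence question for $\mathbb{D}_t=\nabla^t$ being left open (cf.\ the remark following Theorem \ref{nonexist-intro}).
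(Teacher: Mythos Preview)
Your approach differs substantially from the paper's, and the step you yourself flag as ``the main obstacle''---controlling the nonlocal boundary term $\mathcal R$---is a genuine gap, not a technicality. Since $u\ge\lambda\,\mathbb G_s[f]>0$ in $\Omega$ by comparison, for $x\in\RN\setminus\Omega$ one has (up to positive normalization constants)
\[
-v(x)=\int_\Omega\frac{u(y)}{|x-y|^{N+t}}\,dy\ \ge\ \lambda\int_\Omega\frac{\mathbb G_s[f](y)}{|x-y|^{N+t}}\,dy,
\qquad
-(-\Delta)^{s-\frac t2}\phi_1(x)=\int_\Omega\frac{\phi_1(y)}{|x-y|^{N+2s-t}}\,dy>0,
\]
so $\mathcal R\ge c\,\lambda$ for some $c>0$ depending only on $f,\Omega,s,t$. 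Hence the stated reduction ``bounding $\mathcal R$ by a constant independent of $\lambda$'' is impossible. The fallback you sketch---absorbing $\mathcal R$ into $\mu_1A$ and $\lambda c_0$---is not carried out, and since
\[
\mathcal R+\int_\Omega v\,(-\Delta)^{s-\frac t2}\phi_1\,dx=\lambda_1\int_\Omega u\,\phi_1\,dx=\int_\Omega\mu|v|^q\phi_1\,dx+\lambda c_0
\]
is precisely the identity you started from, the splitting yields no new information on its own; a genuine a~priori comparison between $\mathcal R$ and the interior integral would be needed, and none is supplied.

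The paper avoids this difficulty entirely by a two--test-function argument that never produces an exterior contribution. Let $\phi$ and $\psi$ be the torsion functions of $(-\Delta)^s$ and $(-\Delta)^{\frac t2}$, i.e.\ $(-\Delta)^s\phi=1$ and $(-\Delta)^{\frac t2}\psi=1$ in $\Omega$, both vanishing on $\RN\setminus\Omega$. Testing \eqref{KPZ1} with $\phi$ gives $\int_\Omega u\,dx\ge\mu_1\int_\Omega|(-\Delta)^{\frac t2}u|^q\phi\,dx+\lambda\int_\Omega f\phi\,dx$. On the other hand, testing the equation for $\psi$ with $u\in W_0^{s,2}(\Omega)\subset W_0^{\frac t2,2}(\Omega)$ and using that $\psi\equiv0$ outside $\Omega$ yields the \emph{exact} identity $\int_\Omega\psi\,(-\Delta)^{\frac t2}u\,dx=\int_\Omega u\,dx$, with no term over $\RN\setminus\Omega$. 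Young's inequality then gives
\[
\lambda\int_\Omega f\phi\,dx\ \le\ C_{q,\mu_1}\int_\Omega\frac{\psi^{\frac q{q-1}}}{\phi^{\frac1{q-1}}}\,dx,
\]
and since $\phi$ behaves like $\delta^s$ and $\psi$ like $\delta^{\frac t2}$ near $\partial\Omega$, the right-hand side is finite exactly when $q>\frac{2(s+1)}{t+2}$. The key point is that the compactly supported $\psi$ plays the role your function $(-\Delta)^{s-\frac t2}\phi_1$ cannot: it allows integration by parts against $(-\Delta)^{\frac t2}u$ without leaking mass to $\RN\setminus\Omega$.
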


\begin{proof}
Let $u \in W_0^{s,2}(\Omega)$ be a weak  solution to \eqref{KPZ1} and let $\phi \in W_0^{s,2}(\Omega) \cap C^s(\RN)$ be the unique (energy) solution to
$$
\left\{
\begin{aligned}
(-\Delta)^s \phi & = 1, \quad && \textup{ in } \Omega,\\
\phi & = 0, && \textup{ in } \RN \setminus \Omega.
\end{aligned}
\right.
$$
First of all, note that $\phi \in \mathbb{X}^s(\Omega)$, so that we can use $\phi$ as test function in \eqref{KPZ1} and get that
\begin{equation} \label{41}
\begin{aligned}
\int_{\Omega} u \,dx =  \int_{\Omega} \mu(x) |(-\Delta)^{\frac{t}{2}} u|^q \phi\, dx + \lambda \int_{\Omega} f \phi\, dx \geq \mu_1 \int_{\Omega} |(-\Delta)^{\frac{t}{2}} u|^q \phi \, dx + \lambda \int_{\Omega} f \phi\, dx.
\end{aligned}
\end{equation}
On the other hand, let $\psi \in W_0^{\frac{t}{2},2}(\Omega) \cap C^{ \frac{t}{2}}(\RN)$ be the unique (energy) solution to the problem
 \begin{equation} \label{laps2}
\left\{
\begin{aligned}
(-\Delta)^{\frac{t}{2}} \psi & = 1, \quad && \textup{ in } \Omega,\\
\psi & = 0, && \textup{ in } \RN \setminus \Omega.
\end{aligned}
\right.
\end{equation}
Since  $ W_0^{s,2}(\Omega) \subset W_0^{\frac{t}{2},2}(\Omega)$ we can  test \eqref{laps2}  with $u$ and integrate by parts, so that, using \eqref{41},  we obtain  
\begin{equation} \label{43}
\int_{\Omega} \psi (-\Delta)^{\frac{t}{2}}u\, dx = \int_{\Omega} u\, dx \geq \mu_1 \int_{\Omega} |(-\Delta)^{\frac{t}{2}} u|^q \phi \, dx + \lambda \int_{\Omega} f \phi\, dx.
\end{equation}
Moreover, using Young's inequality, we easily see that
\begin{equation} \label{44}
\int_{\Omega} \psi  (-\Delta)^{\frac{t}{2}}u\, dx \leq \mu_1 \int_{\Omega} |(-\Delta)^{\frac{t}{2}}u|^q \phi\, dx +   {C_q}\mu_1^{-\frac1{q-1}} \int_{\Omega} \frac{\psi^{\frac{q}{q-1}}}{\phi^{\frac{1}{q-1}}} dx.
\end{equation}
Thus, combining \eqref{43} and \eqref{44}, we obtain that   
\begin{equation} \label{45}
\lambda \int_{\Omega} f \phi\, dx \leq   {C_q}\mu_1^{-\frac1{q-1}} \int_{\Omega} \frac{\psi^{\frac{q}{q-1}}}{\phi^{\frac{1}{q-1}}} dx.
\end{equation}
Finally, note that (see e.g. \cite[Eq. (1.15)]{F-J-2021}) there exists $C_0 >   0$ (depending only on $\Omega$, $s,t$ and $N$) such that
\begin{equation} \label{455}
C_0^{-1} \delta^s(x) \leq \phi(x) \leq C_0 \delta^{s}(x) \quad \textup{ and } \quad C_0^{-1} \delta^{\frac{t}{2}}(x) \leq \psi(x) \leq C_0 \delta^{\frac{t}{2}}(x), \quad \textup{ in }\ \overline{\Omega}.
\end{equation}
Hence, since $q > \frac{2(s+1)}{t + 2}$, the right hand side in \eqref{45} is bounded, an thus necessarily 
\begin{equation} \label{46}
\lambda   
\leq  
\frac{ C_q \displaystyle \int_{\Omega} \frac{\psi^{\frac{q}{q-1}}}{\phi^{\frac{1}{q-1}}} dx.
}{\displaystyle {\mu_1^{\frac1{q-1}}} \int_{\Omega} f \phi\, dx} =: \lambda^{\star \star}.
\end{equation}
\end{proof}

\begin{remark}
The regularity imposed on $f$ can be slightly weakened. Indeed, to prove our non--existence result, namely Theorem \ref{nonexist}, we only need $f \in L^1(\Omega, \delta^s(x)dx)$. 
Moreover, observe that in the proof we only used that $u$ is a (weak) supersolution to \eqref{KPZ1}. On the negative side,  observe that the bound appearing on the power $q$ seems to be technical.
\end{remark}

We also prove here a generalization of \cite[Theorem 1.2]{AF-2020} covering the cases where $q \neq 2$ and/or $t \neq s$.

\begin{theorem} 
Let $0 < t < \min\{1,2s\}$, $\mu_2 \geq \mu(x) \geq \mu_1 > 0$, $f \in L^1(\Omega)$ with $f^{+} \not \equiv 0$ and $q > 1$. Then, there exists $\lambda^{\star \star} > 0$ such that, for all $\lambda > \lambda^{\star \star}$, \eqref{KPZ3} has no weak solution in $W_0^{s,2}(\Omega)$.
\end{theorem}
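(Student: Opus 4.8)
The strategy is to test \eqref{KPZ3} against the first Dirichlet eigenfunction of $(-\Delta)^s$ in $\Omega$, after noticing that the exterior condition forces the Stein functional $\mathcal{D}_t u$ to control $|u|$ pointwise; the superlinear dependence on $\mathcal{D}_t u$ then produces a quantity bounded independently of $u$, which for $\lambda$ large contradicts the size of the forcing term.

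First, I would record a pointwise bound valid for any $u$ with $u\equiv 0$ in $\RN\setminus\Omega$: restricting the integral defining $\mathcal{D}_t u(x)$ to the region $\{y:|x-y|>\diam(\Omega)\}$, which lies in $\RN\setminus\Omega$ (where $u(y)=0$), gives for a.e.\ $x\in\Omega$
\[
\big(\mathcal{D}_t u(x)\big)^2\ \ge\ c_0\,u(x)^2,\qquad c_0:=\tfrac{a_{N,t}}{2}\int_{\{|z|>\diam(\Omega)\}}\frac{dz}{|z|^{N+2t}}>0,
\]
and hence $\big(\mathcal{D}_t u(x)\big)^q\ge c_0^{q/2}|u(x)|^q$; in particular $|u|^q\le c_0^{-q/2}(\mathcal{D}_t u)^q\in L^1(\Omega)$ for every weak solution.

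Next, let $(\lambda_1,\phi_1)$ be the first Dirichlet eigenpair of $(-\Delta)^s$ in $\Omega$. Exactly as for the function $\phi$ in the proof of Theorem~\ref{nonexist}, boundary regularity theory gives $\phi_1\in C^s(\RN)$, $\phi_1>0$ in $\Omega$, $\phi_1\equiv 0$ in $\RN\setminus\Omega$ and $(-\Delta)^s\phi_1=\lambda_1\phi_1\in L^\infty(\Omega)$, so $\phi_1\in\mathbb{X}^s(\Omega)$ is an admissible test function in Definition~\ref{weak Sol KPZ1}. Using $\phi_1$ as test function in \eqref{KPZ3}, together with $\mu\ge\mu_1$ and the pointwise bound, I get
\[
\lambda_1\int_\Omega u\,\phi_1\,dx=\int_\Omega\mu\,(\mathcal{D}_t u)^q\phi_1\,dx+\lambda\int_\Omega f\,\phi_1\,dx\ \ge\ \mu_1 c_0^{q/2}\int_\Omega|u|^q\phi_1\,dx+\lambda\int_\Omega f\,\phi_1\,dx .
\]
Writing $X:=\int_\Omega|u|^q\phi_1\,dx\in[0,\infty)$ and using Hölder's inequality in the form $\int_\Omega u\,\phi_1\le\int_\Omega|u|\,\phi_1\le(\int_\Omega\phi_1)^{\frac{q-1}{q}}X^{1/q}$, this becomes $\lambda\int_\Omega f\,\phi_1\,dx\le\lambda_1(\int_\Omega\phi_1)^{\frac{q-1}{q}}X^{1/q}-\mu_1 c_0^{q/2}X$. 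Since $q>1$, the right--hand side is bounded above over $X\ge 0$ by a finite (and explicitly computable) constant $C^{\star\star}=C^{\star\star}(N,s,t,q,\mu_1,\Omega)$; hence, provided $\int_\Omega f\,\phi_1\,dx>0$, one concludes $\lambda\le C^{\star\star}/\int_\Omega f\,\phi_1\,dx=:\lambda^{\star\star}$, so \eqref{KPZ3} has no weak solution in $W_0^{s,2}(\Omega)$ for $\lambda>\lambda^{\star\star}$.

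The verifications that are routine --- admissibility of $\phi_1$, finiteness of $X$, the validity of the pointwise bound for a merely $W_0^{s,2}$--solution --- all rest only on $u\equiv 0$ outside $\Omega$ and $(\mathcal{D}_t u)^q\in L^1(\Omega)$, which are built into the notion of weak solution, so no extra regularity is needed; as in Theorem~\ref{nonexist}, only the fact that $u$ is a weak supersolution is actually used. The point I expect to require the most care is the positivity $\int_\Omega f\,\phi_1\,dx>0$: it is immediate when $f\gneqq 0$ (recovering \cite[Theorem 1.2]{AF-2020}), but for sign--changing $f$ with $f^+\not\equiv 0$ it needs a separate argument, and this --- rather than the one--variable optimisation, which is elementary --- is where the real work lies.
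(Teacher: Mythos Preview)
Your approach is genuinely different from the paper's, and for $f\gneqq 0$ it is correct and arguably more elementary: the pointwise bound $\mathcal{D}_t u(x)\ge c_0^{1/2}|u(x)|$ (a kind of ``pointwise Poincar\'e'' coming from $u\equiv 0$ outside $\Omega$) is a nice observation that the paper does not use, and the one--variable optimisation closes the argument cleanly once $\int_\Omega f\,\phi_1>0$.

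However, the gap you flag for sign--changing $f$ is real and is not resolvable within your framework. The eigenfunction $\phi_1$ is strictly positive on all of $\Omega$, so you have no freedom to avoid the region where $f<0$; and replacing $\phi_1$ by a localised test function $\psi$ destroys the identity $(-\Delta)^s\psi=\lambda_1\psi$ on which your H\"older step relies (since $(-\Delta)^s\psi$ is then nonzero on all of $\Omega$, you cannot control $\int_\Omega u\,(-\Delta)^s\psi$ by $\int_\Omega|u|^q\psi$ alone).

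The paper sidesteps this by a different mechanism. It picks an arbitrary nonnegative $\phi\in C_c^\infty(\Omega)$ with $\int_\Omega f\,\phi^{q/(q-1)}>0$ --- such $\phi$ exists precisely because $f^+\not\equiv 0$ --- and tests with $\phi^{q/(q-1)}$. Instead of invoking a pointwise bound on $u$, it rewrites $\int_\Omega u\,(-\Delta)^s(\phi^{q/(q-1)})$ as the bilinear double integral, uses $|\phi^{q'}(x)-\phi^{q'}(y)|\le C_q|\phi(x)-\phi(y)|(\phi^{1/(q-1)}(x)+\phi^{1/(q-1)}(y))$ and Cauchy--Schwarz in the $y$--integral to obtain
\[
\int_\Omega u\,(-\Delta)^s(\phi^{q/(q-1)})\,dx\ \le\ C\int_\Omega \big(\mathcal{D}_t u\big)\,\big(\mathcal{D}_{2s-t}\phi\big)\,\phi^{\frac{1}{q-1}}\,dx,
\]
and then Young's inequality with exponents $q,q/(q-1)$ absorbs the $(\mathcal{D}_t u)^q\phi^{q/(q-1)}$ term into the right--hand side of the equation, leaving $\lambda\int_\Omega f\,\phi^{q/(q-1)}\le \widetilde C\int_\Omega(\mathcal{D}_{2s-t}\phi)^{q/(q-1)}$. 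The flexibility in the choice of $\phi$ is exactly what accommodates sign--changing $f$.

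In short: your argument proves the theorem under the stronger hypothesis $\int_\Omega f\,\phi_1>0$ (in particular for $f\gneqq 0$, recovering \cite[Theorem~1.2]{AF-2020}), but to reach the full statement with $f^+\not\equiv 0$ you need a localisable estimate for $\int_\Omega u\,(-\Delta)^s\psi$ in terms of $\mathcal{D}_t u$, and that is precisely what the paper's bilinear computation supplies.
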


\begin{proof}
Let $u \in W_0^{s,2}(\Omega)$ be a weak solution to \eqref{KPZ3} and let $\phi \in C_c^{\infty}(\Omega)$ be an arbitrary non-negative function such that
\begin{equation} \label{propphi}
\int_{\Omega} f \phi^{\frac{q}{q-1}} dx > 0 \quad \textup{ and } \quad \int_{\Omega}  \big( \mathcal{D}_{2s-t}\phi \big)^{\frac{q}{q-1}} dx < + \infty. 
\end{equation}
Using $\phi^{\frac{q}{q-1}}$ as test function in \eqref{KPZ3}, we get
\begin{equation}  \label{48}
\int_{\Omega} u (-\Delta)^s (\phi^{\frac{q}{q-1}})\, dx \geq \mu_1 \int_{\Omega} (\mathcal{D}_t u )^q \phi^{\frac{q}{q-1}} dx + \lambda \int_{\Omega} f \phi^{\frac{q}{q-1}} dx.
\end{equation}
On the other hand, using the Mean value Theorem and H\"older's and Young's inequalities, we infer that
\begin{align*}
& \int_{\Omega} u  (-\Delta)^s (\phi^{\frac{q}{q-1}})\, dx = \iint_{\R^{2N}} \frac{(u(x)-u(y))(\phi^{\frac{q}{q-1}}(x) - \phi^{\frac{q}{q-1}}(y))}{|x-y|^{N+2s}} \,dy dx  \\
& \quad \leq C_q \iint_{\R^{2N}} \frac{|u(x)-u(y)||\phi(x)-\phi(y)|}{|x-y|^{N+2s}} (\phi^{\frac{1}{q-1}}(x) + \phi^{\frac{1}{q-1}}(y))\, dy dx \\
& \quad = 2 C_q \iint_{\R^{2N}} \frac{|u(x)-u(y)||\phi(x)-\phi(y)|}{|x-y|^{N+2s}} \phi^{\frac{1}{q-1}}(x)\, dy dx 
\leq 2 C_q \int_{\RN} \big(\mathcal{D}_t  u\big)\, \big( \mathcal{D}_{2s-t} \phi\big)\  \phi^{\frac{1}{q-1}}\, dx \\
& \quad = 2 C_q \int_{\Omega} \big(\mathcal{D}_t u\big)  \, \big(\mathcal{D}_{2s-t} \phi\big) \phi^{\frac{1}{q-1}} \, dx \leq \mu_1 \int_{\Omega} (\mathcal{D}_t u )^q \phi^{\frac{q}{q-1}} dx + \widetilde{C}_{q,\mu_1} \int_{\Omega} (\mathcal{D}_{2s-t}\phi)^{\frac{q}{q-1}} dx\,.
\end{align*}
%with $c_q > 0$ depending only on $q$ and $\widetilde{C} > 0$ depending only on $q$ and $\mu_1$. 
Combining the above chain of inequalities with \eqref{48}, we get that necessarily 
$$
\lambda \int_{\Omega} f \phi^{\frac{q}{q-1}} dx \leq  \widetilde{C}_{q,\mu_1} \int_{\Omega} (\mathcal{D}_{2s-t}\phi)^{\frac{q}{q-1}} dx.
$$
Hence, defining
$$
\lambda^{\star \star} := \inf \left\{ \frac{ \displaystyle \widetilde{C}_{q,\mu_1} \int_{\Omega} (\mathcal{D}_{2s-t}\phi)^{\frac{q}{q-1}} dx}{ \displaystyle \int_{\Omega}f \phi^{\frac{q}{q-1}} dx} : \phi \in C_c^{\infty}(\Omega) \textup{ is non-negative and satisfies  }  \eqref{propphi} \right\},
$$
the result immediately follows. 
\end{proof}

\bibliographystyle{plain}
\bibliography{Bibliography}
\vspace{0.5cm}

\end{document}